\def\openC{{\rm C\kern-.18cm\vrule width.8pt height 7pt depth-.2pt \kern.18cm}}
\def\openN{{{\rm I}\kern-.16em {\rm N}}}
\def\openR{{{\rm I}\kern-.16em {\rm R}}}
\def\openT{{{\rm T}\kern-.42em {\rm T}}}
\def\openZ{{{\rm Z}\kern-.28em{\rm Z}}}
\newtheorem{thm}{Theorem}[section]
\newtheorem{lem}[thm]{Lemma}
\newtheorem{prop}[thm]{Proposition}
\theoremstyle{definition}
\begin{document}

\title{
{\textbf{{Strict positive definiteness on a product of compact two-point homogeneous spaces}}} \vspace{-4pt}
\author{\sc V. S. Barbosa and V. A. Menegatto}
}
\date{}
\maketitle \vspace{-30pt}
\bigskip

\begin{center}
\parbox{13 cm}{\small We present an explicit characterization for the real, continuous, isotropic and strictly positive definite kernels on a product of
compact two-point homogeneous spaces, in the cases in which at least one of the spaces is a sphere of dimension greater than 1 and the other is not a
circle.\ The result complements similar characterizations previously obtained for products of high dimensional spheres.   }
\end{center}

\noindent{\bf Mathematics Subject Classifications (2010):} 22F30, 33C05, 33C45, 33C55, 41A63, 42C10

\medskip

\noindent{\bf Keywords}: two-point homogeneous spaces, strict positive definiteness, isotropy, antipodal sets.

\thispagestyle{empty}

\section{Introduction}

Let $\mathbb{M}^d$ denote a $d$-dimensional compact two-point homogeneous space.\ A real and continuous kernel $K$ on $\mathbb{M}^d$ is {\em positive definite} if it is a symmetric function, that is,
$$K(x,y)=K(y,x), \quad x,y \in \mathbb{M}^d,$$ and satisfies the following condition: if $n$ is a positive integer and $x_1,x_2, \ldots, x_n$ are distinct points on $\mathbb{M}^d$, then the $n \times n$ matrix
$[K(x_i,x_j)]$ is nonnegative definite.\ This formulation for the concept of positive definiteness is a refinement of the classical one described in \cite{berg}.\ A positive definite kernel on $\mathbb{M}^d$ is {\em strictly positive definite} if all the matrices in the previous definition are in fact positive definite.\ Positive definite kernels and functions on metric spaces are an important subject in many areas of classical and modern mathematics, such as radial basis function interpolation and approximation, geomathematics, geostatistics, Fourier analysis, etc.\ The interested reader may consider the references \cite{cheney, freeden, gneiting, wendland} as a starting point for ratifying that.

The continuity of a positive definite kernel on $\mathbb{M}^d$ is attached to the usual (geodesic) distance on $\mathbb{M}^d$, here denoted by $|xy|$, $x, y \in \mathbb{M}^d$.\ Throughout the paper, we will assume the geodesic distance on $\mathbb{M}^d$ fulfills the following requirement: all geodesics have the same length $2\pi$.\ The geodesic distance on $\mathbb{M}^d$ allows the introduction of the notion of {\em isotropy} of a positive definite kernel $K$ on $\mathbb{M}^d$, in the same way I. J. Schoenberg did for positive definite kernels on spheres (\cite{schoenberg}).\ It demands that
$$
K(x,y)=K_i^d(\cos |xy|/2), \quad x,y \in \mathbb{M}^d,
$$
for some function $K_i^d: [-1,1] \to \mathbb{R}$, here called the {\em isotropic part} of $K$.\ According to R. Gangolli (\cite{gangolli}), a continuous and isotropic kernel $K$ on $\mathbb{M}^d$ is positive definite if and only if
\begin{equation}\label{PDM}
K_i^d(t)=\sum_{k=0}^{\infty}a_k^{(d-2)/2,\beta} P_k^{(d-2)/2,\beta}(t), \quad t \in [-1,1],
\end{equation}
in which $a_k^{(d-2)/2,\beta} \in [0,\infty)$, $k\in \mathbb{Z}_+$ and $\sum_{k=0}^{\infty}a_k^{(d-2)/2,\beta} P_k^{(d-2)/2,\beta}(1) <\infty$.\ Here, $\beta=(d-2)/2, -1/2, 0, 1, 3$, depending on the respective category $\mathbb{M}^d$ belongs to, among the following ones (\cite{wang}): the unit spheres $S^d$, $d=1,2,\ldots$, the real projective spaces $\mathbb{P}^d(\mathbb{R})$, $d=2,3,\ldots$, the complex projective spaces
$\mathbb{P}^d(\mathbb{C})$, $d=4,6,\ldots$, the quaternionic projective spaces $\mathbb{P}^d(\mathbb{H})$, $d=8,12,\ldots$, and the Cayley projective plane $\mathbb{P}^{d}(Cay)$, $d=16$.\ The symbol $P_k^{(d-2)/2,\beta}$ stands for the Jacobi polynomial of degree $k$ associated with the pair $((d-2)/2,\beta)$.

This is the point where we can explain what the intentions in this paper are.\ The first target is to present a characterization for the real, continuous and isotropic kernels which are positive definite on a cartesian product of compact two-point homogeneous spaces.\ If $\mathbb{M}^d$ and $\mathbb{H}^{d'}$ are the spaces, the isotropy of a kernel $K: \mathbb{M}^d \times \mathbb{H}^{d'} \to \mathbb{R}$ corresponds to isotropy in both spaces, that is, $K$ is of the form
$$K((x,w),(y,z))=K_i^{d,d'}(\cos (|x y|/2), \cos (|wz|/2)), \quad x,y \in \mathbb{M}^d,\quad w,z \in \mathbb{H}^{d'},$$
for some function $K_i^{d,d'}: [-1,1]^2 \to \mathbb{R}$ (the isotropic part of $K$).\ Since we believe such a characterization can be deduced via classical results from harmonic analysis (see \cite{bachoc}), the proposal here is to obtain the characterization using an alternative procedure involving the Gauss hypergeometric function $\tensor[_2]{F}{_1}$ and basic convergence arguments.\ The characterization itself can be described as follows: if $K$ is a real, continuous and isotropic kernel on $\mathbb{M}^d \times \mathbb{H}^{d'}$, then it is positive definite if and only if its isotropic part has a series representation in the form
$$
K_i^{d,d'}(t,s) = \sum_{k,l=0}^\infty a_{k,l}(K_i^{d,d'})P_k^{(d-2)/2,\beta}(t)P_l^{(d'-2)/2,\beta'}(s), \quad t,s \in [-1,1]^2,
$$
in which $a_{k,l}(K_i^{d,d'})\geq 0$, $k,l\in \mathbb{Z}_+$ and $\sum_{k,l=0}^\infty a_{k,l}(K_i^{d,d'})P_k^{(d-2)/2,\beta}(1)P_l^{(d'-2)/2,\beta'}(1)<\infty$.\ The numbers $\beta$ and $\beta'$ have to agree, respectively, with
$d$ and $d'$ respecting Wang's classification in \cite{wang}.\ The result will be deduced in Section 2.

As explained in \cite{guella1}, positive definiteness on a product of spaces allows intermediate notions of strict positive definiteness.\ In the case of a positive definite kernel $K: \mathbb{M}^d \times \mathbb{H}^{d'} \to \mathbb{R}$, one of them reads like this: $K$ is $DC$-{\em strictly positive definite} if the strict positive definiteness condition previously introduced holds for points of $\mathbb{M}^d \times \mathbb{H}^{d'}$ having distinct components.\ Equivalently, all matrices of the form $[K((x_i,w_i),(x_j,w_j))]$ are positive definite if the $x_i$ are distinct in $\mathbb{M}^d$ and the $w_i$ are distinct in $\mathbb{H}^{d'}$.\ Clearly, $DC$-strict positive definiteness is a weaker notion in the sense that it demands plain strict positive definiteness for just some of the distinct points in $\mathbb{M}^d \times \mathbb{H}^{d'}$.\ In the case $\mathbb{M}^d =S^d$ and $\mathbb{H}^{d'}=S^{d'}$, $d,d'\geq 2$, $DC$-strict positive definiteness of a real, continuous, isotropic and positive definite kernel $K$ was shown to be equivalent to the following condition (\cite{guella3}): the set $\{k+l: a_{k,l}(K_i^{d,d'})>0\}$ contains infinitely many even and infinitely many odd integers.\ In Section 3, we will complete this line of investigation, presenting a characterization for $DC$-strict positive definiteness in all the other products involving compact two-point homogeneous spaces, except when one of the spaces is a circle.

The third target in the paper is to present a characterization for plain strict positive definiteness of a real, continuous, isotropic and strict positive definite kernel on $\mathbb{M}^d \times \mathbb{H}^{d'}$, in the same cases mentioned at the end of the previous paragraph.\ That is a counterpart of similar results for products of spheres previously obtained in \cite{guella2,guella3,guella4}.\ The details will appear in Section 4.

\section{Positive definiteness}

In this section, we will provide a characterization for the real, continuous, isotropic and positive definite kernels on $\mathbb{M}^d \times \mathbb{H}^{d'}$.\ No additional assumption on either $d$ or $d'$ will be made.\ The characterization is an extension to all the compact two-point homogeneous spaces of that one previously obtained in \cite{guella1} in the case both spaces are spheres (see also \cite{berg} for an alternative proof in that case).

Let us begin with some basics on Jacobi polynomials.\ For  $\alpha,\beta>-1$, the set $\{ P_k^{\alpha,\beta}: k\in \mathbb{Z}_+\}$ of Jacobi polynomials associated to the pair $(\alpha,\beta)$ is orthogonal on $[-1,1]$ in the sense that
$$
\int_{-1}^1 P_k^{\alpha,\beta}(t)P_l^{\alpha,\beta}(t)(1-t)^{\alpha}(1+t)^{\beta}dt = \delta_{k,l} h_k^{\alpha,\beta},
$$
where
$$
h_k^{\alpha,\beta} = \frac{2^{\alpha+\beta+1}}{2k+\alpha+\beta+1}\frac{\Gamma(k+\alpha+1)\Gamma(k+\beta+1)}{\Gamma(k+1)\Gamma(k+\alpha+\beta+1)},\quad k\in \mathbb{Z}_+.
$$
Here, and in many other places in the paper, $\Gamma$ will stand for the usual gamma function.\ An immediate consequence is the orthogonality of the family
$$
\left\{ (t,s)\in [-1,1]\mapsto P_k^{\alpha,\beta}(t)P_l^{\alpha',\beta'}(s):k,l\in \mathbb{Z}_+\right\}
$$
with respect to the weight function
$$
\sigma_{d}^{d'}(t,s):=(1-t)^\alpha(1+t)^\beta(1-s)^{\alpha'}(1+s)^{\beta'}, \quad  t,s \in [-1,1].
$$

There exists a generating formula for Jacobi polynomials via the {\em Gauss hypergeometric function} $\tensor[_2]{F}{_1}$ (\cite{askey,erdelyi,rainville,szego}).\ As a regular solution of the hypergeometric differential equation,
the hypergeometric function has a representation in the form
$$
\tensor[_2]{F}{_1}(a,b;c;z) = \sum_{n=0}^\infty \frac{(a)_n(b)_n z^n}{(c)_n n!},
$$
in which $a,b$ and $c$ are generic parameters, $z$ is a complex variable, and
$$
(\lambda)_n=\frac{\Gamma(n+\lambda)}{\Gamma(\lambda)} =  \left\{ \begin{array}{ccc} \lambda(\lambda+1)\cdots(\lambda+n-1), & \hbox{ if } & n\geq 1\\
1, & \hbox{ if } & n=0
\end{array}\right.
$$
is the Pochhammer symbol.\ The convergence holds for $|z|<1$ if $c$ is not a negative integer and for $|z|=1$ if $\mbox{Re\,}(c-a-b)>0$ (\cite[p. 63]{szego}).

For simplicity's sake, we will write
$$F(a,b;c;z) := \tensor[_2]{F}{_1}(a,b;c;z).$$
The hypergeometric function $F$ is differentiable with respect to $z$ in $\{z\in \mathbb{C}: |z|<1\}$ (\cite[p. 281]{whittaker}) and
$$
\frac{d}{dz}F(a,b;c;z)=\frac{ab}{c}F(a+1,b+1;c+1;z).
$$
In particular,
$$
\int_{z_1}^{z_2}F(a,b;c;z)dz=\frac{c-1}{(a-1)(b-1)}\left[F(a-1,b-1;c-1;z_2)-F(a-1,b-1;c-1;z_1) \right].
$$
A generating formula for Jacobi polynomials based on the function $F$ is the content of the following Poisson formula (\cite[p. 21]{askey}):
$$
\sum_{n=0}^\infty \frac{P_n^{\alpha,\beta}(1)P_n^{\alpha,\beta}(t)r^n}{h_n^{\alpha,\beta}} = G^{\alpha,\beta}(r) F\left(\frac{\alpha+\beta+2}{2},\frac{\alpha+\beta+3}{2};\beta+1;\frac{2r(1+t)}{(1+r)^2}\right), \quad t \in [-1,1],
$$
in which
$$
G^{\alpha,\beta}(r):=\frac{2^{-(\alpha+\beta+1)}\Gamma(\alpha+\beta+2)(1-r)}{\Gamma(\alpha+1)\Gamma(\beta+1)(1+r)^{\alpha+\beta+2}}.
$$
We now detach a consequence of the results described above to be used ahead.

\begin{lem}\label{poisson2} Let $\alpha\geq \beta\geq-1/2$ and $r\in(-1,1)$.\ The series
$$
\sum_{n=0}^\infty \frac{P_n^{\alpha,\beta}(1)P_n^{\alpha,\beta}(t)r^n}{h_n^{\alpha,\beta}}
$$
is convergent for all $t\in [-1,1]$.
\end{lem}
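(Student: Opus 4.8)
The plan is to reduce the statement to the Poisson formula displayed above, evaluated at $t=1$. The starting observation is the classical estimate
$$
\bigl|P_n^{\alpha,\beta}(t)\bigr| \le P_n^{\alpha,\beta}(1) = \binom{n+\alpha}{n}, \quad t \in [-1,1],
$$
which holds whenever $\alpha \ge \max\{\beta,-1/2\}$ (\cite[Theorem 7.32.1]{szego}); the hypothesis $\alpha \ge \beta \ge -1/2$ certainly implies this. Since $h_n^{\alpha,\beta}>0$ and $P_n^{\alpha,\beta}(1)>0$ for every $n$, the general term of the series in the statement satisfies
$$
\left| \frac{P_n^{\alpha,\beta}(1)P_n^{\alpha,\beta}(t)r^n}{h_n^{\alpha,\beta}} \right| \le \frac{\bigl(P_n^{\alpha,\beta}(1)\bigr)^2\, |r|^n}{h_n^{\alpha,\beta}}, \quad t \in [-1,1],
$$
so it is enough to prove that $\sum_{n=0}^\infty \bigl(P_n^{\alpha,\beta}(1)\bigr)^2 \rho^n/h_n^{\alpha,\beta}$ converges for $\rho=|r| \in [0,1)$. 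Convergence of the original series (in fact uniform in $t$, by the Weierstrass $M$-test) then follows by comparison.

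To deal with $\sum_n \bigl(P_n^{\alpha,\beta}(1)\bigr)^2 \rho^n/h_n^{\alpha,\beta}$, I would apply the Poisson formula with $t$ set equal to $1$ and $r$ replaced by $\rho$. Its left-hand side becomes exactly this series, while its right-hand side equals
$$
G^{\alpha,\beta}(\rho)\, F\!\left( \frac{\alpha+\beta+2}{2}, \frac{\alpha+\beta+3}{2}; \beta+1; \frac{4\rho}{(1+\rho)^2} \right).
$$
Because $\beta+1 \ge 1/2$ is not a negative integer, $F$ is given by a convergent power series on $\{|z|<1\}$; and for $\rho \in [0,1)$ the arithmetic--geometric mean inequality yields $(1+\rho)^2 \ge 4\rho$ with equality only at $\rho=1$, so the argument $4\rho/(1+\rho)^2$ lies in $[0,1)$. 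Hence the right-hand side is a finite real number and the series on the left converges, as needed. For $r \in (0,1)$ one can in fact bypass the comparison step altogether: for every $t \in [-1,1]$ the argument $2r(1+t)/(1+r)^2$ lies in $[0,4r/(1+r)^2] \subseteq [0,1)$, so the Poisson formula applies verbatim and already exhibits convergence at such $t$; the comparison argument is only really needed to cover the values $r \in (-1,0]$.

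The one point that needs care is keeping the hypergeometric argument strictly inside the unit disk. On the circle $|z|=1$ the standard sufficient condition $\mathrm{Re}(c-a-b)>0$ fails here, since $c-a-b = -\alpha - 3/2 \le -1$, so one cannot afford to let $\rho$ (equivalently $|r|$) reach $1$; this is precisely why the hypothesis $r \in (-1,1)$ is imposed. The remaining case $r=0$ is trivial, and no further case distinction is required.
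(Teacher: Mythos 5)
Your proof is correct, and it in fact takes a safer route than the paper's own argument. The paper applies the Poisson formula directly at general $t\in[-1,1]$ and $r\in(-1,1)$ and asserts that the argument $2r(1+t)/(1+r)^2$ of $F$ has modulus less than $1$ because ``$(1+r)^2>4|r|$ whenever $r\in(-1,1)\setminus\{0\}$''; that inequality is actually false for $r\in(-1,-3+2\sqrt{2}\,]$ (for instance $r=-1/2$, $t=1$ gives the argument $-8$), where the hypergeometric \emph{series} diverges even though the Jacobi series itself still converges. Your extra reduction step --- the bound $|P_n^{\alpha,\beta}(t)|\le P_n^{\alpha,\beta}(1)$ from \cite[Theorem 7.32.1]{szego} plus comparison, which pushes everything to $t=1$ and $\rho=|r|\ge 0$, where the argument is $4\rho/(1+\rho)^2=1-(1-\rho)^2/(1+\rho)^2<1$ --- is precisely what covers the negative-$r$ range that the paper's inequality misses, together with your correct observation that for $r>0$ no comparison is needed. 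So both proofs rest on the same key tool (the Poisson kernel formula combined with convergence of the series for $F$ strictly inside the unit disk), but your version repairs a genuine oversight in the paper's. Two minor remarks: like the paper, you invoke the Poisson formula as an identity whose left-hand side converges whenever the series on the right does; your use of it only at $t=1$ and $\rho\in[0,1)$ is squarely within the standard statement, so this is fine. And once you have reduced matters to $\sum_n (P_n^{\alpha,\beta}(1))^2\rho^n/h_n^{\alpha,\beta}$, you could even dispense with $F$ altogether, since $(P_n^{\alpha,\beta}(1))^2/h_n^{\alpha,\beta}=O(n^{2\alpha+1})$ grows only polynomially, making the convergence for $\rho<1$ immediate.
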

\begin{proof}If $r=0$, the result is obvious.\ Otherwise, taking into account the convergence of the series representation for $F$, it is promptly seen that the series in the statement of the lemma will be convergent as long as
$$
|1+t|<\frac{(1+r)^2}{2|r|}.
$$
However, a simple calculation reveals that $(1+r)^2> 4|r|$ whenever $r\in(-1,1)\setminus\{0\}$.\ Thus, since $t\in[-1,1]$, the convergence follows in this case as well.
\end{proof}

Lemma \ref{isopart} is a critical step towards the desired characterization in this section.\ It follows from the definition of positive definiteness along with Gangolli's characterization for positive definiteness on a single compact two-point homogeneous space and the Schur product theorem for nonnegative definite matrices.\ Once again, $\beta$ and $\beta'$ have to agree with
Wang's classification for the compact two-point homogeneous spaces.

\begin{lem}\label{isopart} For fixed nonnegative integers $k$ and $l$, the function
$$
(t,s) \in [-1,1]\mapsto P_k^{(d-2)/2,\beta}(t)P_l^{(d'-2)/2,\beta'}(s)
$$
is the isotropic part of a positive definite kernel on $\mathbb{M}^d\times \mathbb{H}^{d'}$.
\end{lem}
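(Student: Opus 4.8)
The plan is to exhibit the kernel directly as a product of two single-space kernels and then to invoke the Schur product theorem. By Gangolli's characterization \eqref{PDM}, with the coefficient indexed by $k$ equal to $1$ and all the others equal to $0$, the function
$$
(x,y) \in \mathbb{M}^d \times \mathbb{M}^d \mapsto P_k^{(d-2)/2,\beta}(\cos(|xy|/2)) =: K_1(x,y)
$$
is a continuous, isotropic, positive definite kernel on $\mathbb{M}^d$; likewise
$$
(w,z) \in \mathbb{H}^{d'} \times \mathbb{H}^{d'} \mapsto P_l^{(d'-2)/2,\beta'}(\cos(|wz|/2)) =: K_2(w,z)
$$
is a continuous, isotropic, positive definite kernel on $\mathbb{H}^{d'}$.\ Setting $K((x,w),(y,z)) := K_1(x,y)\,K_2(w,z)$, one obtains a real, continuous and symmetric kernel on $\mathbb{M}^d \times \mathbb{H}^{d'}$ whose isotropic part is exactly the function in the statement, so the only point left to verify is that $K$ is positive definite.

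So I would fix distinct points $(x_1,w_1),\dots,(x_n,w_n)$ in $\mathbb{M}^d\times\mathbb{H}^{d'}$ and observe that the matrix $[K((x_i,w_i),(x_j,w_j))]$ is the Hadamard product of $[K_1(x_i,x_j)]$ and $[K_2(w_i,w_j)]$; by the Schur product theorem it then suffices to show that each of these two matrices is nonnegative definite.\ The one thing that needs a little care is that the $x_i$ (respectively the $w_i$) need not be pairwise distinct, so the positive definiteness of $K_1$ on $\mathbb{M}^d$ does not apply verbatim.\ To get around this I would collapse repeated indices: if $y_1,\dots,y_m$ are the distinct points occurring among $x_1,\dots,x_n$ and $c_1,\dots,c_n\in\mathbb{R}$, then, since $K_1(x_i,x_j)$ depends only on the points $x_i$ and $x_j$,
$$
\sum_{i,j=1}^n c_i c_j K_1(x_i,x_j) = \sum_{r,s=1}^m \Big(\sum_{i:\,x_i=y_r}c_i\Big)\Big(\sum_{j:\,x_j=y_s}c_j\Big) K_1(y_r,y_s) \ge 0,
$$
the last inequality being the positive definiteness of $K_1$ applied to the genuinely distinct points $y_1,\dots,y_m$.\ Hence $[K_1(x_i,x_j)]$ is nonnegative definite, and the identical argument applies to $[K_2(w_i,w_j)]$.

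Combining the two, the Schur product theorem yields that $[K((x_i,w_i),(x_j,w_j))]$ is nonnegative definite, which is exactly what is needed; together with the symmetry, continuity and the identification of the isotropic part already noted, this proves that the stated function is the isotropic part of a positive definite kernel on $\mathbb{M}^d\times\mathbb{H}^{d'}$.\ The only genuine obstacle is the bookkeeping in the middle paragraph, namely that distinctness of the pairs $(x_i,w_i)$ does not force distinctness of their individual components; once that is dealt with by the collapsing trick, everything else is routine.
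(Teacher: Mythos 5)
Your proposal is correct and follows exactly the route the paper indicates (the paper does not write out a proof, stating only that the lemma follows from Gangolli's characterization \eqref{PDM} on each factor together with the Schur product theorem). Your additional care in collapsing repeated components, since distinctness of the pairs $(x_i,w_i)$ does not force distinctness of the $x_i$ or the $w_i$, is a valid and welcome filling-in of a detail the paper leaves implicit.
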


In the next proposition, $dx$ and $dy$ will denote the volume elements on $\mathbb{M}^d$ and $\mathbb{H}^{d'}$, respectively (dimensions will be omitted).\ We will require the expansion of a function $f$ from
$L_1([-1,1],\sigma_d^{d'})$:
$$
f(t,s)\sim \sum_{k,l=0}^\infty a_{k,l}(f)P_k^{(d-2)/2,\beta}(t)P_l^{(d'-2)/2,\beta'}(s)
$$
in which
$$
 a_{k,l}(f)=\frac{1}{h_k^{(d-2)/2,\beta} h_l^{(d'-2)/2,\beta'}}\int_{[-1,1]^2} f(t,s) P_k^{(d-2)/2,\beta}(t)P_l^{(d'-2)/2,\beta'}(s)d\sigma_d^{d'}(t,s), \quad k,l\in \mathbb{Z}_+.
$$
The dependence of $a_{k,l}$ upon $d,d',\beta,\beta'$ will be omitted.

In the case $f$ is the isotropic part of a kernel belonging to the setting of the paper, the following formula holds.

\begin{prop}\label{FH2}  Let $k$ and $l$ be nonnegative integers.\ If $K$ is a real, continuous and isotropic kernel on
$\mathbb{M}^d\times \mathbb{H}^{d'}$, then there exists a positive constant $C$, that depends upon $d,d',\beta$ and $\beta'$, so that
\begin{eqnarray*}
a_{k,l}(K_i^{d,d'}) & = & C \int_{\mathbb{M}^d\times \mathbb{H}^{d'}} \left[ \int_{\mathbb{M}^d\times \mathbb{H}^{d'}} K((x,y),(w,z))P_k^{(d-2)/2,\beta}(\cos (|x y|/2))\right.\\
 & & \hspace*{35mm} \left.\times  P_l^{(d'-2)/2,\beta'}(\cos (|wz|/2)) dydz\right]dxdw.
\end{eqnarray*}
\end{prop}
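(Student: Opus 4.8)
The plan is as follows. Write $\alpha=(d-2)/2$ and $\alpha'=(d'-2)/2$; by the isotropy of $K$ we have $K((x,w),(y,z))=K_i^{d,d'}(\cos(|xy|/2),\cos(|wz|/2))$ for $x,y\in\mathbb{M}^d$, $w,z\in\mathbb{H}^{d'}$, so the double integral on the right-hand side is built entirely from $K_i^{d,d'}$ and Jacobi polynomials. I would read the asserted formula as a Funk--Hecke-type identity and collapse the iterated integral over $\mathbb{M}^d\times\mathbb{H}^{d'}$ (taken twice) down to the one-dimensional Fourier--Jacobi coefficient $a_{k,l}(K_i^{d,d'})$, exploiting the homogeneity of the two spaces. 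The single non-formal ingredient is a standard feature of a compact two-point homogeneous space: for each fixed $x\in\mathbb{M}^d$ the push-forward of the volume element $dy$ under the map $y\mapsto\cos(|xy|/2)$ equals a fixed positive multiple $c_d\,(1-t)^{\alpha}(1+t)^{\beta}\,dt$ of the Jacobi weight on $[-1,1]$, with $c_d$ independent of $x$. This is precisely the normalization that makes Gangolli's expansion \eqref{PDM} work, and it reflects the transitivity of the isotropy subgroup at $x$ on each geodesic sphere centered at $x$; the analogous statement holds on $\mathbb{H}^{d'}$ with exponents $\alpha',\beta'$ and a constant $c_{d'}$.

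Granting this, I would fix $(x,w)\in\mathbb{M}^d\times\mathbb{H}^{d'}$ and examine the inner integral
$$
J(x,w):=\int_{\mathbb{M}^d\times\mathbb{H}^{d'}}K_i^{d,d'}(\cos(|xy|/2),\cos(|wz|/2))\,P_k^{\alpha,\beta}(\cos(|xy|/2))\,P_l^{\alpha',\beta'}(\cos(|wz|/2))\,dy\,dz.
$$
Because $K$ is continuous and both spaces are compact, the integrand is bounded and Fubini's theorem applies; integrating first over $z\in\mathbb{H}^{d'}$ and then over $y\in\mathbb{M}^d$ and applying the push-forward identity on each factor converts $J(x,w)$ into
$$
c_d\,c_{d'}\int_{[-1,1]^2}K_i^{d,d'}(t,s)\,P_k^{\alpha,\beta}(t)\,P_l^{\alpha',\beta'}(s)\,d\sigma_d^{d'}(t,s),
$$
which is manifestly independent of $(x,w)$ --- as it must be, by the joint invariance of $K_i^{d,d'}(\cos(|xy|/2),\cos(|wz|/2))$, of the two Jacobi factors, and of $dy\,dz$ under the isometry groups of the spaces. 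By the definition of $a_{k,l}$ recorded just before the statement, this last quantity equals $c_d\,c_{d'}\,h_k^{\alpha,\beta}\,h_l^{\alpha',\beta'}\,a_{k,l}(K_i^{d,d'})$.

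It then only remains to integrate the constant $J$ over $(x,w)\in\mathbb{M}^d\times\mathbb{H}^{d'}$, which multiplies it by the total volume $|\mathbb{M}^d|\,|\mathbb{H}^{d'}|$; the claimed identity follows, with $C$ equal to the reciprocal of the product $|\mathbb{M}^d|\,|\mathbb{H}^{d'}|\,c_d\,c_{d'}\,h_k^{\alpha,\beta}\,h_l^{\alpha',\beta'}$ of normalizing constants, so in particular $C>0$ and $C$ does not depend on $K$. I expect the only genuine obstacle to be the rigorous justification of the push-forward identity on each factor; once that normalization is in hand, the rest is simply Fubini's theorem together with bookkeeping of the constants.
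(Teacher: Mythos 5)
Your argument is correct and follows essentially the same route as the paper's proof: both reduce the inner integral over $(y,z)$, for fixed $(x,w)$, to a positive multiple of $a_{k,l}(K_i^{d,d'})$ by Fubini's theorem together with the zonal integration identity on each factor, and then integrate the resulting constant over $(x,w)$ to pick up the volume factor. The only cosmetic differences are that you justify the reduction via the push-forward of the volume element onto the Jacobi weight, where the paper instead cites the Funk--Hecke formula of Meaney (the same underlying fact for a zonal integrand), and that your explicit constant also involves $h_k^{(d-2)/2,\beta}h_l^{(d'-2)/2,\beta'}$, a harmless extra dependence on $k,l$ since these are fixed in the statement.
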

\begin{proof} The first step in the proof is to observe that the integral
$$
\int_{\mathbb{M}^d\times \mathbb{H}^{d'}} f(\cos (|x y|/2), \cos (|wz|/2))  P_k^{(d-2)/2,\beta}(\cos (|x y|/2)P_l^{(d-2)/2,\beta'}(\cos (|wz|/2))dy dz
$$
equals to a positive multiple of $a_{k,l}(f)$.\ Indeed, this follows after two applications of the Funk-Hecke formula for harmonics on compact two-point homogeneous spaces (see Proposition 2.8 and the Remark 2.9 in \cite{meaney})
coupled with Fubini's theorem.\ The multiple depends upon $d,d',\beta$ and $\beta'$.\ Calling it $C$ and integrating leads to the formula in the statement of the proposition.
\end{proof}

If we introduce the positive definiteness of $K$ as an assumption, we obtain the following expected consequence.

\begin{prop}\label{positive} If $K$ is a real, continuous, isotropic and positive definite kernel on $\mathbb{M}^d\times \mathbb{H}^{d'}$, then
$$
 a_{k,l}(K_i^{d,d'})\geq 0, \quad k,l \in \mathbb{Z}_+.
$$
\end{prop}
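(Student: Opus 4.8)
The plan is to combine Proposition \ref{FH2} with Lemma \ref{isopart} and the Schur product theorem, thereby reducing the claim to the elementary fact that a continuous positive definite kernel on a compact space integrates to a nonnegative number against a finite positive measure taken twice.

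First I would fix nonnegative integers $k$ and $l$ and use Lemma \ref{isopart} to produce a real, continuous, isotropic and positive definite kernel $L=L_{k,l}$ on $X:=\mathbb{M}^d\times\mathbb{H}^{d'}$ whose isotropic part is $(t,s)\mapsto P_k^{(d-2)/2,\beta}(t)P_l^{(d'-2)/2,\beta'}(s)$; explicitly,
$$
L((x,w),(y,z))=P_k^{(d-2)/2,\beta}(\cos(|xy|/2))\,P_l^{(d'-2)/2,\beta'}(\cos(|wz|/2)),\qquad (x,w),(y,z)\in X.
$$
Matching this against the integrand in Proposition \ref{FH2}, the formula there reads
$$
a_{k,l}(K_i^{d,d'})=C\int_{X}\int_{X}K(p,q)\,L(p,q)\,d\mu(q)\,d\mu(p),
$$
where $\mu$ denotes the volume measure on $X$ and $C>0$; since $KL$ is continuous on the compact set $X\times X$, Fubini's theorem turns the right-hand side into $C$ times the integral of $KL$ against $\mu\times\mu$.

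Next I would observe that $K$ and $L$ are both continuous and positive definite on $X$, so for any distinct points $p_1,\dots,p_n\in X$ the matrices $[K(p_i,p_j)]$ and $[L(p_i,p_j)]$ are nonnegative definite and hence, by the Schur product theorem, so is their entrywise product $[K(p_i,p_j)L(p_i,p_j)]$. Thus $P:=KL$ is a continuous positive definite kernel on $X$, and it remains only to show $\int_X\int_X P\,d\mu\,d\mu\ge 0$, for that gives $a_{k,l}(K_i^{d,d'})=C\int_X\int_X P\,d\mu\,d\mu\ge 0$. To prove the integral inequality I would take a finite partition of $X$ into nonempty, pairwise disjoint Borel sets $E_1,\dots,E_N$, choose $p_i\in E_i$ for each $i$ (these are automatically distinct), and note that the associated Riemann sum $\sum_{i,j=1}^N P(p_i,p_j)\mu(E_i)\mu(E_j)$ is nonnegative, being the value of the nonnegative definite quadratic form with matrix $[P(p_i,p_j)]$ at the vector with nonnegative entries $(\mu(E_1),\dots,\mu(E_N))$. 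Because $X$ is compact, $P$ is uniformly continuous on $X\times X$, so as the mesh of the partition tends to zero these sums converge to $\int_X\int_X P\,d\mu\,d\mu$, which is therefore $\ge 0$.

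The only genuinely delicate step is the last one, passing from the definition of positive definiteness through distinct points to the integral inequality. It is routine here because sample points chosen from disjoint partition blocks are automatically distinct, so no limiting argument admitting repeated points is required; uniform continuity on the compact product $X\times X$ then closes the gap. Everything else is bookkeeping built on Proposition \ref{FH2} and Lemma \ref{isopart}.
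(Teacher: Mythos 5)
Your proposal is correct and follows essentially the same route as the paper: reduce via Proposition \ref{FH2} and Lemma \ref{isopart} (plus the Schur product theorem) to showing that the double integral of a continuous positive definite kernel over $\mathbb{M}^d\times\mathbb{H}^{d'}$ is nonnegative, and then establish that by approximating the integral with nonnegative quadratic-form Riemann sums built from a fine Borel partition, using uniform continuity on the compact product to control the error. The only cosmetic difference is that the paper fixes $\epsilon>0$ and bounds $|I-\overline{I}|\leq \epsilon\,\mbox{Vol.}(\mathbb{M}^d\times\mathbb{H}^{d'})^2$ rather than phrasing it as a mesh-to-zero limit, which is the same argument.
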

\begin{proof} Due to Lemma \ref{isopart}, if $K$ is positive definite on $\mathbb{M}^d\times \mathbb{H}^{d'}$, then the integrand in the double integral appearing in the statement of the previous proposition defines a real, continuous, isotropic and positive definite on $\mathbb{M}^d\times \mathbb{H}^{d'}$.\ Hence, the the proof of the proposition resumes to showing that if $K$ is a real, continuous, isotropic and positive definite kernel on
$\mathbb{M}^d\times \mathbb{H}^{d'}$, then the integral
$$I:=\int_{\mathbb{M}^d\times \mathbb{H}^{d'}} \left[ \int_{\mathbb{M}^d\times \mathbb{H}^{d'}} K((x,y),(w,z)) dydz\right]dxdw$$
is nonnegative.\ In order to achieve that, we will fix $\epsilon >0$ and will show there exists a number $\overline{I}=\overline{I}(\epsilon) \geq 0$ so that
$$|I-\overline{I}|\leq \epsilon \mbox{Vol.}(\mathbb{M}^d\times \mathbb{H}^{d'})^2.$$
Indeed, if $I$ were negative, then the information above with a convenient choice for $\epsilon$ would produce a contradiction.\ Since $\mathbb{M}^d\times \mathbb{H}^{d'}$ is a compact metric space, the kernel $K$ is actually uniformly continuous on $\mathbb{M}^d\times \mathbb{H}^{d'}$.\ In particular, we can select $\delta>0$ so that
$|K((x,y),(w,z))-K((x',y'),(w',z'))|<\epsilon$ whenever $x,x',y,y'\in \mathbb{M}^d$, $w,w',z,z' \in \mathbb{H}^{d'}$, $|xx'|<\delta$, $|yy'|<\delta$, $|ww'|<\delta$ and $|zz'|<\delta$.\ Since the metric spaces
$\mathbb{M}^d$ and $\mathbb{H}^{d'}$ are totally bounded, we can cover them with finitely many open balls of radius $\delta/2$.\ Likewise, we can cover $\mathbb{M}^d\times \mathbb{H}^{d'}$ with finitely many open balls.\ Using the covering, we can partition $\mathbb{M}^d\times \mathbb{H}^{d'}$ into finitely many Borel subsets
$$\mathbb{M}^d\times \mathbb{H}^{d'}= \sqcup_{j=1}^p B_j$$
so that $|xx'|<\delta$ and $|ww'|<\delta$ whenever $((x,w),(x',w'))\in B_j$, $j=1,2,\ldots,p$.\ It is now clear that if, $((x,y),(w,z)),((x',y'), (w',z'))\in B_j\times B_k$, for some pair $(k,j)$, then $|K((x,y),(w,z))-K((x',y'),(w',z'))|<\epsilon$.\ To proceed, observe that
$$I=\sum_{l,k=1}^p \int_{B_j}\int_{B_k} K((x,y),(w,z))dxdwdydz.$$
Next, for each $j \in \{1,2,\ldots,p\}$, choose $(x_j,w_j)\in B_j$ and define $\lambda_j:=$ the volume of $B_j$.\ Clearly, the number
$$\overline{I}:=\sum_{j,k=1}^p \lambda_j \lambda_k K((x_j,x_k),(w_j,w_k))$$
is nonnegative due to the positive definiteness of the kernel $K$.\ On the other hand
\begin{eqnarray*}
|I-\overline{I}| & = & \left|\sum_{j,k=1}^p\int_{B_j}\int_{B_k}  [K((x,y),(w,z))- K((x_j,x_k),(w_j,w_k))]dxdwdydz\right|\\
                 & \leq & \epsilon \sum_{j,k=1}^p \lambda_j \lambda_k,
\end{eqnarray*}
that is,
$|I-\overline{I}| \leq \epsilon\, \mbox{Vol.}(\mathbb{M}^d\times \mathbb{H}^{d'})^2$.
\end{proof}

Next, we move to convergence of double series defined by Jacobi polynomials.

\begin{lem} \label{conve1} Let $K$ be a real, continuous, isotropic and positive definite kernel on $\mathbb{M}^d\times \mathbb{H}^{d'}$.\ If $r,\rho \in (-1,1)$, then the double series
$$
\sum_{k,l=0}^\infty a_{k,l}(K_i^{d,d'})P_k^{(d-2)/2,\beta}(1)P_l^{(d'-2)/2,\beta'}(1)r^k\rho^l
$$
converges.
\end{lem}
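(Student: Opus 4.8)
The plan is to reduce the convergence of the double series to the two single‑variable Poisson formulas recorded before Lemma \ref{poisson2}, exploiting the nonnegativity of the coefficients $a_{k,l}(K_i^{d,d'})$ established in Proposition \ref{positive}. First I would rewrite Proposition \ref{FH2} more conveniently: since $K$ is continuous on the compact space $\mathbb{M}^d\times\mathbb{H}^{d'}$, it is bounded, say $|K|\le M$, and hence the formula there gives, for every $k,l$,
$$
0\le a_{k,l}(K_i^{d,d'})\le C\,M\int_{\mathbb{M}^d\times\mathbb{H}^{d'}}\int_{\mathbb{M}^d\times\mathbb{H}^{d'}}\bigl|P_k^{(d-2)/2,\beta}(\cos(|xy|/2))\bigr|\,\bigl|P_l^{(d'-2)/2,\beta'}(\cos(|wz|/2))\bigr|\,dy\,dz\,dx\,dw.
$$
That bound is clumsy because of the absolute values, so instead I would argue directly on the partial sums, using positivity to avoid any cancellation issues.

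The key step is the following. Fix $r,\rho\in(-1,1)$; without loss of generality take $r,\rho\ge 0$ after replacing them by $|r|,|\rho|$ (the terms of the series are nonnegative once we insert absolute values on $r^k\rho^l$, and $P_k^{\alpha,\beta}(1)>0$, so controlling the series with $|r|,|\rho|$ controls the original). For a fixed single compact two‑point homogeneous space, Gangolli's representation (\ref{PDM}) together with the reproducing property of the Jacobi expansion shows that the function
$$
t\mapsto \sum_{k=0}^\infty \frac{P_k^{\alpha,\beta}(1)P_k^{\alpha,\beta}(t)}{h_k^{\alpha,\beta}}\,r^k
$$
is, up to the positive normalizing constant, the isotropic part of a continuous positive definite kernel on $\mathbb{M}^d$ — indeed Lemma \ref{poisson2} guarantees the series converges pointwise on $[-1,1]$, and the closed form via $F$ shows it is continuous; call this kernel $Q_r$ on $\mathbb{M}^d$ and, analogously, $Q'_\rho$ on $\mathbb{H}^{d'}$. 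Now consider the kernel on $(\mathbb{M}^d\times\mathbb{H}^{d'})\times(\mathbb{M}^d\times\mathbb{H}^{d'})$ given by the product of $K$ with $Q_r$ in the $\mathbb{M}^d$‑variables and $Q'_\rho$ in the $\mathbb{H}^{d'}$‑variables; by the Schur product theorem this is again continuous, isotropic and positive definite. Applying Proposition \ref{FH2} (more precisely, the nonnegativity argument inside the proof of Proposition \ref{positive}, which only used positive definiteness) to this product kernel and expanding $Q_r$, $Q'_\rho$ in their Jacobi series, Fubini together with the orthogonality relations collapses the double integral to
$$
\sum_{k,l=0}^\infty a_{k,l}(K_i^{d,d'})\,P_k^{(d-2)/2,\beta}(1)\,P_l^{(d'-2)/2,\beta'}(1)\,r^k\rho^l \;=\; (\text{a positive constant})\times I_{r,\rho},
$$
where $I_{r,\rho}\ge 0$ is the corresponding double integral of a continuous (hence bounded) function over a compact set, so it is finite. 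Since every term is nonnegative, finiteness of the sum is exactly convergence of the series.

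The main obstacle I expect is the interchange of summation and integration needed to identify the double integral with the double series: one must justify that $\sum_{k,l}a_{k,l}P_k(1)P_l(1)r^k\rho^l$ can be obtained by integrating the termwise‑expanded product kernel. I would handle this by truncating — work first with the partial sums $\sum_{k=0}^{N}\sum_{l=0}^{M}$, for which the interchange is trivial and the orthogonality computation is exact, obtaining
$$
\sum_{k=0}^{N}\sum_{l=0}^{M} a_{k,l}(K_i^{d,d'})\,P_k^{(d-2)/2,\beta}(1)\,P_l^{(d'-2)/2,\beta'}(1)\,r^k\rho^l \;\le\; C'\,I_{r,\rho},
$$
because the omitted tail contributes a nonnegative amount to $I_{r,\rho}$; then let $N,M\to\infty$ using monotone convergence of nonnegative partial sums. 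A secondary technical point is that $Q_r$ must genuinely be continuous up to $t=1$: this is where the estimate $(1+r)^2>4|r|$ from the proof of Lemma \ref{poisson2} is used, since it places the argument $2r(1+t)/(1+r)^2$ of $F$ strictly inside the unit disk uniformly in $t\in[-1,1]$, so the closed form is continuous on all of $[-1,1]$. With these two points dispatched, the conclusion is immediate.
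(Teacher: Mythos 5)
Your argument is correct in substance, but it runs on a different mechanism than the paper's proof, so a comparison is in order. The paper never leaves $[-1,1]^2$: it writes each term $a_{k,l}(K_i^{d,d'})P_k^{(d-2)/2,\beta}(1)P_l^{(d'-2)/2,\beta'}(1)r^k\rho^l$ as an integral against $d\sigma_d^{d'}$, sums under the integral via the Poisson formula to get the closed hypergeometric form, bounds $K_i^{d,d'}$ and the weight by constants, and then integrates the two factors $F$ explicitly with the antiderivative formula, ending with a finite closed-form bound. You instead work on the manifolds: you promote the Poisson kernel to genuine continuous positive definite kernels $Q_r$ on $\mathbb{M}^d$ and $Q'_\rho$ on $\mathbb{H}^{d'}$ (legitimate, by the sufficiency direction of Gangolli's characterization, Lemma \ref{poisson2} at $t=1$, and continuity of the closed form since $2r(1+t)/(1+r)^2$ stays strictly inside the unit disk), multiply them into $K$ via Schur, and then control the partial sums by the single finite integral $I_{r,\rho}$ using Proposition \ref{FH2}, the nonnegativity of integrals of positive definite kernels (the argument inside Proposition \ref{positive}), truncation and monotone convergence. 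What your route buys is that no explicit integration of $F$ is ever needed; what the paper's route buys is an explicit quantitative bound and fewer structural ingredients (no Schur product, no auxiliary kernels). Two repairs are needed in your write-up, both minor. First, the identity you assert is not exact: since $a_{k,l}$ is defined with the normalization $1/(h_k^{(d-2)/2,\beta}h_l^{(d'-2)/2,\beta'})$ and your $Q_r$, $Q'_\rho$ also carry the factors $1/h_k$, $1/h_l$, the orthogonality/Funk--Hecke computation actually yields $\sum_{k,l} a_{k,l}(K_i^{d,d'})P_k^{(d-2)/2,\beta}(1)P_l^{(d'-2)/2,\beta'}(1)r^k\rho^l/(h_k^{(d-2)/2,\beta}h_l^{(d'-2)/2,\beta'}) = C\, I_{r,\rho}$, not the target series itself; this is harmless because $h_k^{\alpha,\beta}$ is bounded above in $k$, so the target series is dominated by a fixed multiple of the one you obtain (alternatively, define $Q_r$ without the $1/h_k$ factors: the coefficients $P_k^{\alpha,\beta}(1)r^k$ are still nonnegative, the series still converges uniformly because $|P_k^{\alpha,\beta}(t)|\le P_k^{\alpha,\beta}(1)$ for $\alpha\ge\beta\ge -1/2$, and then the proportionality is exact). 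Second, like the paper's own proof, your reliance on the hypergeometric closed form for continuity of $Q_r$ tacitly excludes the circle case $d=1$ or $d'=1$, where that continuity must be checked directly (it is the classical Poisson kernel for cosine expansions); the paper flags this case explicitly, and you should too.
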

\begin{proof} We will prove the lemma in the case in which $d\neq 1$ and $d'\neq 1$.\ The proof in the cases in which either $d=1$ or $d'=1$ can be adapted from the general proof presented below.\ However, the computation
referring to the the case in which the space is $S^1$ needs to be done directly without mentioning the hypergeometric function.\ The calculations made in \cite{guella1} are very similar to what is needed in these specials cases.\ 
Otherwise, the general term of the series in the statement of the lemma is
$$
\int_{-1}^1\int_{-1}^1 K_i^{d,d'}(t,s)\frac{P_k^{(d-2)/2,\beta}(1)}{h_k^{(d-2)/2,\beta}} P_k^{(d-2)/2,\beta}(t)r^k \frac{P_l^{(d'-2)/2,\beta'}(1)}{h_l^{(d'-2)/2,\beta'}}P_l^{(d'-2)/2,\beta'}(s)\rho^l d\sigma_d^{d'}(t,s).
$$
Introducing the Gauss hypergeometric function in the above expression leaves the double series in the form
$$\hspace*{-15mm}
\int_{-1}^1\int_{-1}^1 \left[ K_i^{d,d'}(t,s) G^{(d-2)/2,\beta}(r) F\left(\frac{d+2\beta+2}{4},\frac{d+2\beta+4}{4};\beta+1;\frac{2r(1+t)}{(1+r)^2}\right)\right.$$
 $$\hspace*{15mm} \times\left. G^{(d'-2)/2,\beta'}(\rho) F\left(\frac{d'+2\beta'+2}{4},\frac{d'+2\beta'+4}{4};\beta'+1;\frac{2\rho(1+s)}{(1+\rho)^2}\right) d\sigma_d^{d'}(t,s) \right].$$
Due to the positive definiteness of $K$ and the continuity of $K_i^{d,d'}$ in $[-1,1]\times [-1,1]$, we can estimate the double integral above by
$$\hspace*{-30mm}C^{r,\rho} \int_{-1}^1\int_{-1}^1 \left[  F\left(\frac{d+2\beta+2}{4},\frac{d+2\beta+4}{4};\beta+1;\frac{2r(1+t)}{(1+r)^2}\right)\right.$$
$$ \hspace*{30mm}\times \left. F\left(\frac{d'+2\beta'+2}{4},\frac{d'+2\beta'+4}{4};\beta'+1;\frac{2\rho(1+s)}{(1+\rho)^2}\right) d\sigma_d^{d'}(t,s) \right].$$
in which $C^{r,\rho}$ is a positive multiple of $G^{(d-2)/2,\beta}(r) G^{(d'-2)/2,\beta'}(\rho)$.\ The weight in the definition of $d\sigma_d^{d'}$ can be bounded by $2^{\beta+\beta'-2+(d+d')/2}$.\ Introducing this bound and solving the resulting integrals, we conclude that the double series is at most
$$ C\,\, F\left(\frac{d+2\beta-2}{4},\frac{d+2\beta}{4};\beta;\frac{4r}{(1+r)^2}\right)\\
  F\left(\frac{d'+2\beta'-2}{4},\frac{d'+2\beta'}{4};\beta';\frac{4\rho}{(1+\rho)^2}\right),
$$
in which
$$C=G^{(d-2)/2,\beta}(r) G^{(d'-2)/2,\beta'}(\rho)\frac{\beta \beta' 2^{\beta+\beta'+4+(d+d')/2}}{(d+2\beta-2)(d'+2\beta'-2)(d+\beta)(d'+\beta)}\frac{(1+r)^2(1+\rho)^2}{r\rho}.$$
The proof is complete.
\end{proof}

\begin{prop} \label{conve} If $K$ is a real, continuous, isotropic and positive definite kernel on $\mathbb{M}^d\times \mathbb{H}^{d'}$.\ then the double series
$$
\sum_{k,l=0}^\infty a_{k,l}(K_i^{d,d'})P_k^{(d-2)/2,\beta}(t)P_l^{(d'-2)/2,\beta'}(s)
$$
converges absolutely and uniformly for $(t,s)\in [-1,1]^2$.
\end{prop}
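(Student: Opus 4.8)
The plan is to reduce the statement to the finiteness of a single numerical series, the one obtained by evaluating at $(t,s)=(1,1)$, and to bound that series by passing to the limit in suitable Abel (Poisson) means. First I would recall the classical estimate $|P_k^{\alpha,\beta}(t)|\leq P_k^{\alpha,\beta}(1)$, valid for every $t\in[-1,1]$ whenever $\alpha\geq\beta\geq-1/2$ (Szeg\H{o}, 7.32.2); the pairs $(\alpha,\beta)=((d-2)/2,\beta)$ and $((d'-2)/2,\beta')$ dictated by Wang's classification always lie in that range. Combined with Proposition \ref{positive}, which gives $a_{k,l}(K_i^{d,d'})\geq0$, this produces the termwise domination
\begin{equation*}
\left|a_{k,l}(K_i^{d,d'})P_k^{(d-2)/2,\beta}(t)P_l^{(d'-2)/2,\beta'}(s)\right|\leq a_{k,l}(K_i^{d,d'})P_k^{(d-2)/2,\beta}(1)P_l^{(d'-2)/2,\beta'}(1),
\end{equation*}
for all $(t,s)\in[-1,1]^2$.\ By the Weierstrass $M$-test it therefore suffices to prove that $S:=\sum_{k,l=0}^\infty a_{k,l}(K_i^{d,d'})P_k^{(d-2)/2,\beta}(1)P_l^{(d'-2)/2,\beta'}(1)<\infty$.\ (The bound obtained in the proof of Lemma \ref{conve1} does not settle this directly, since its right-hand side may blow up as $r,\rho\to1^-$.)

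To control $S$ I would introduce, for $r\in(0,1)$, the Poisson kernel $\mathcal{P}_r(u):=\sum_{k=0}^\infty (h_k^{(d-2)/2,\beta})^{-1}P_k^{(d-2)/2,\beta}(1)P_k^{(d-2)/2,\beta}(u)r^k$, together with its analogue $\mathcal{Q}_\rho(v)$ built from $d',\beta'$.\ By the Poisson formula quoted above, $\mathcal{P}_r(u)=G^{(d-2)/2,\beta}(r)\,F\!\left(\tfrac{d+2\beta+2}{4},\tfrac{d+2\beta+4}{4};\beta+1;\tfrac{2r(1+u)}{(1+r)^2}\right)$; since $G^{(d-2)/2,\beta}(r)>0$ and the series defining $F$ has nonnegative coefficients while its argument stays in $[0,1)$ (because $(1+r)^2>4|r|$, as in the proof of Lemma \ref{poisson2}), we get $\mathcal{P}_r(u)\geq0$ for every $u\in[-1,1]$.\ Integrating the defining series term by term against the weight $(1-u)^{(d-2)/2}(1+u)^\beta$ and using $P_0^{(d-2)/2,\beta}\equiv1$ with orthogonality yields $\int_{-1}^1\mathcal{P}_r(u)(1-u)^{(d-2)/2}(1+u)^\beta\,du=1$.\ Hence $\mathcal{P}_r(u)\mathcal{Q}_\rho(v)\,d\sigma_d^{d'}(u,v)$ is a probability measure on $[-1,1]^2$.

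Next I would substitute the integral formula for $a_{k,l}(K_i^{d,d'})$ into the double series $\sum_{k,l}a_{k,l}(K_i^{d,d'})P_k^{(d-2)/2,\beta}(1)P_l^{(d'-2)/2,\beta'}(1)r^k\rho^l$ and interchange summation and integration --- legitimate because, for fixed $r,\rho\in(0,1)$, the one-variable Poisson series converge uniformly on $[-1,1]$ (the hypergeometric argument staying bounded away from $1$, as in Lemma \ref{poisson2}) and $K_i^{d,d'}$ is bounded on $[-1,1]^2$ --- to obtain
\begin{equation*}
\sum_{k,l=0}^\infty a_{k,l}(K_i^{d,d'})P_k^{(d-2)/2,\beta}(1)P_l^{(d'-2)/2,\beta'}(1)\,r^k\rho^l=\int_{[-1,1]^2}K_i^{d,d'}(u,v)\,\mathcal{P}_r(u)\,\mathcal{Q}_\rho(v)\,d\sigma_d^{d'}(u,v).
\end{equation*}
The right-hand side is the integral of $K_i^{d,d'}$ against a probability measure, hence it is at most $M:=\max_{[-1,1]^2}|K_i^{d,d'}|<\infty$, uniformly in $r,\rho\in(0,1)$.\ Finally, all terms $a_{k,l}(K_i^{d,d'})P_k^{(d-2)/2,\beta}(1)P_l^{(d'-2)/2,\beta'}(1)$ being nonnegative, letting $r,\rho\to1^-$ in each finite partial sum and then letting the number of terms tend to infinity gives $S\leq M<\infty$; together with the first paragraph this proves the proposition.

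I expect the only genuine difficulty to be the uniform (in $r$ and $\rho$) bound on these Abel means: the point is to recognize the relevant Jacobi--Poisson kernel as a positive summability kernel --- its positivity being read straight off the nonnegativity of the coefficients of the hypergeometric series --- rather than trying to push the estimate of Lemma \ref{conve1} up to the boundary $r,\rho\to1^-$, where it degrades.
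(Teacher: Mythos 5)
Your proof is correct, and at the decisive step it takes a genuinely different (and in fact more robust) route than the paper. Both arguments start the same way: the bound $|P_k^{\alpha,\beta}(t)|\leq P_k^{\alpha,\beta}(1)$ (legitimate since $\alpha\geq\beta\geq-1/2$ for all spaces in Wang's classification) together with Proposition \ref{positive} reduces the claim, via the Weierstrass $M$-test, to finiteness of the series at $(t,s)=(1,1)$, and both finish by letting $r,\rho\to1^-$ in the monotone partial sums. The difference is how the Abel means are bounded: the paper quotes Lemma \ref{conve1} and treats the resulting constant as if it were independent of $(r,\rho)$, whereas the constant actually produced in that proof contains the factor $G^{(d-2)/2,\beta}(r)\,F\bigl(\tfrac{d+2\beta-2}{4},\tfrac{d+2\beta}{4};\beta;\tfrac{4r}{(1+r)^2}\bigr)$, which is of order $(1-r)^{2-d}$ as $r\to1^-$ and hence unbounded once $d>2$ (similarly in $\rho$); so the paper's passage to the limit is not fully justified as written, a point you correctly flag. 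Your substitute closes precisely this gap: recognizing $\mathcal{P}_r(u)\mathcal{Q}_\rho(v)\,d\sigma_d^{d'}(u,v)$ as a probability measure (positivity from the nonnegative hypergeometric coefficients with argument in $[0,1)$, normalization from orthogonality against $P_0\equiv1$) turns the Abel means into $\int_{[-1,1]^2}K_i^{d,d'}\,\mathcal{P}_r\mathcal{Q}_\rho\,d\sigma_d^{d'}\leq\max_{[-1,1]^2}|K_i^{d,d'}|$, a bound uniform in $r,\rho\in(0,1)$, which is the classical Schoenberg-type summability-kernel argument and even yields the explicit estimate $\sum_{k,l}a_{k,l}(K_i^{d,d'})P_k^{(d-2)/2,\beta}(1)P_l^{(d'-2)/2,\beta'}(1)\leq\max|K_i^{d,d'}|$. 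The only loose stitch in your write-up is the justification of the term-by-term integrations: uniform convergence of the Jacobi--Poisson series on $[-1,1]$ does not come from the hypergeometric representation directly, but from the domination $|P_k^{\alpha,\beta}(u)|\leq P_k^{\alpha,\beta}(1)$ combined with convergence at $u=1$ (Lemma \ref{poisson2}); that is a one-line fix and does not affect the argument.
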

\begin{proof}
Due to the Weierstrass $M$-test for double series, it suffices to show that
$$
\sum_{k,l=0}^\infty a_{k,l}(K_i^{d,d'})P_k^{(d-2)/2,\beta}(1)P_l^{(d'-2)/2,\beta'}(1)
$$
converges.\ In order to do that, consider the sequence $(s_{p,q})_{p,q\in \mathbb{Z_+}}$ given by the partial sums
$$
s_{p,q}:=\sum_{k=0}^p\sum_{l=0}^q a_{k,l}(K_i^{d,d'})P_k^{(d-2)/2,\beta}(1)P_l^{(d'-2)/2,\beta'}(1),\quad p,q\in \mathbb{Z}_+.
$$
By Lemma \ref{positive}, $a_{k,l}(K_i^{d,d'})\geq 0$, for all $k,l\in \mathbb{Z}_+$.\ In particular, $s_{p,q}\leq s_{p',q'}$ when $p\leq p'$ and $q\leq q'$.\ On the other hand, by the previous lemma,
$$
\sum_{k=0}^p\sum_{l=0}^q a_{k,l}(K_i^{d,d'})P_k^{(d-2)/2,\beta}(1)P_l^{(d'-2)/2,\beta'}(1)r^k\rho^l\leq C,\quad p,q\in \mathbb{Z}_+, \quad r,\rho\in(-1,1).
$$
for some $C>0$.\ Applying the limits when $r,\rho\to 1^+$, we deduce the sequence $(s_{p,q})$ is bounded above.\ The convergence of $(s_{p,q})$ follows.
\end{proof}

The main result in the section is as follows.

\begin{thm}\label{gangolliext} Let $K$ be a real, continuous and isotropic kernel on $\mathbb{M}^d\times \mathbb{H}^{d'}$.\ It is positive definite on $\mathbb{M}^d\times \mathbb{H}^{d'}$ if and only if its isotropic part $K_i^{d,d'}$ has a representation in the form
$$
K_i^{d,d'}(t,s) = \sum_{k,l=0}^\infty a_{k,l}(K_i^{d,d'})P_k^{(d-2)/2,\beta}(t)P_l^{(d'-2)/2,\beta'}(s), \quad t,s \in [-1,1]^2,
$$
in which $a_{k,l}(K_i^{d,d'})\geq 0$, $k,l\in \mathbb{Z}_+$ and $\sum_{k,l=0}^\infty a_{k,l}(K_i^{d,d'})P_k^{(d-2)/2,\beta}(1)P_l^{(d'-2)/2,\beta'}(1)<\infty$.
\end{thm}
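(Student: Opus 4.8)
The plan is to prove the two implications separately, assembling the ingredients already established in this section; no further hard estimate is required.

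For the sufficiency, suppose $K_i^{d,d'}$ admits the stated representation with $a_{k,l}(K_i^{d,d'})\ge 0$ and $\sum_{k,l}a_{k,l}(K_i^{d,d'})P_k^{(d-2)/2,\beta}(1)P_l^{(d'-2)/2,\beta'}(1)<\infty$. Using the bound $|P_k^{(d-2)/2,\beta}(t)|\le P_k^{(d-2)/2,\beta}(1)$ for $t\in[-1,1]$ (valid in all the categories under consideration, since there $(d-2)/2\ge\beta\ge -1/2$), the Weierstrass $M$-test shows the series converges absolutely and uniformly on $[-1,1]^2$; hence $K_i^{d,d'}$ is continuous and $K$ is a bona fide continuous isotropic kernel. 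Each partial sum $\sum_{k=0}^{p}\sum_{l=0}^{q}a_{k,l}(K_i^{d,d'})P_k^{(d-2)/2,\beta}(t)P_l^{(d'-2)/2,\beta'}(s)$ is, by Lemma \ref{isopart} together with the fact that a finite nonnegative linear combination of positive definite kernels is positive definite, the isotropic part of a positive definite kernel on $\mathbb{M}^d\times\mathbb{H}^{d'}$. Since for any finite configuration the matrices $[K((x_i,w_i),(x_j,w_j))]$ are limits of nonnegative definite matrices and the cone of nonnegative definite matrices is closed, $K$ is positive definite.

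For the necessity, let $K$ be real, continuous, isotropic and positive definite. As $K_i^{d,d'}$ is continuous on the compact set $[-1,1]^2$ and $\sigma_d^{d'}$ is a finite weight, $K_i^{d,d'}\in L_1([-1,1]^2,\sigma_d^{d'})$, so the coefficients $a_{k,l}(K_i^{d,d'})$ are well defined. Proposition \ref{positive} gives $a_{k,l}(K_i^{d,d'})\ge 0$ for all $k,l$, and Proposition \ref{conve} gives that the series $g(t,s):=\sum_{k,l}a_{k,l}(K_i^{d,d'})P_k^{(d-2)/2,\beta}(t)P_l^{(d'-2)/2,\beta'}(s)$ converges absolutely and uniformly on $[-1,1]^2$; evaluating at $(t,s)=(1,1)$ yields the summability condition in the statement.

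It remains to identify $g$ with $K_i^{d,d'}$, and this is the only point needing care (the genuinely substantive work, namely the convergence of the Jacobi double series under positive definiteness, having been done in Lemma \ref{conve1} and Proposition \ref{conve}). The function $g$ is continuous, being a uniform limit of continuous functions. Since that convergence is uniform and $\sigma_d^{d'}$ is integrable, one may integrate the defining series of $g$ term by term against $P_m^{(d-2)/2,\beta}(t)P_n^{(d'-2)/2,\beta'}(s)$ with respect to $d\sigma_d^{d'}$; the orthogonality relations recalled at the start of the section then give $a_{m,n}(g)=a_{m,n}(K_i^{d,d'})$ for all $m,n\in\mathbb{Z}_+$. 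Hence $h:=K_i^{d,d'}-g$ is continuous with all Jacobi coefficients zero. Because the linear span of $\{(t,s)\mapsto P_k^{(d-2)/2,\beta}(t)P_l^{(d'-2)/2,\beta'}(s)\}$ is the set of all polynomials in $(t,s)$, dense in $C([-1,1]^2)$ by the Weierstrass theorem, choosing polynomials $p_j\to h$ uniformly gives $\int_{[-1,1]^2}h^2\,d\sigma_d^{d'}=\lim_j\int_{[-1,1]^2}hp_j\,d\sigma_d^{d'}=0$; since $\sigma_d^{d'}>0$ on $(-1,1)^2$ and $h$ is continuous, $h\equiv 0$. Thus $K_i^{d,d'}=g$, which is the asserted representation, and the proof is complete.
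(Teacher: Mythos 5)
Your proof is correct and follows essentially the same route as the paper: nonnegativity of the coefficients from Proposition \ref{positive}, convergence from Lemma \ref{conve1}/Proposition \ref{conve}, identification of the sum with $K_i^{d,d'}$ via coincidence of Jacobi coefficients, and the converse from Lemma \ref{isopart} plus closedness of the positive semidefinite cone under limits. The only difference is that you spell out details the paper leaves implicit (the bound $|P_k^{\alpha,\beta}(t)|\le P_k^{\alpha,\beta}(1)$ for the $M$-test and the density argument showing a continuous function with vanishing Jacobi coefficients is zero), which is a welcome but not essentially different elaboration.
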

\begin{proof}
Consider the function $g$ defined by the Fourier expansion
$$g(t,s) \sim \sum_{k,l=0}^\infty a_{k,l}(K_i^{d,d'})P_k^{(d-2)/2,\beta}(t)P_l^{(d'-2)/2,\beta'}(s),\quad t,s \in [-1,1].$$
If $K$ is positive definite, then Proposition \ref{conve1} guarantees the convergence of the series for $t=s=1$.\ Proposition \ref{conve} implies convergence for all the other values of $t$ and $s$ while Proposition \ref{positive} yields that all the coefficients in the expansion are nonnegative.\ Since $g$ is continuous and the Fourier coefficients of $K_i^{d,d'}$ coincide with those of $g$, it follows that $K_i^{d,d'}=g$.\ This takes care of one implication in the theorem.\ As for the other, it follows from Lemma \ref{isopart} and the fact that the pointwise limit of positive definite kernels is itself positive definite.
\end{proof}

\section{$DC$-strict positive definiteness}

Either one of the concepts of strict positive definiteness we have introduced so far, demands considering $n\times n$ matrices $A=[A_{\mu\nu}]$ with
$$A_{\mu\nu}=K_i^{d,d'}(\cos{(|x_\mu x_\nu|/2)},\cos{(|w_\mu w_\nu|/2)}),$$
in which $K_i^{d,d'}$ is the isotropic part of the kernel and $(x_\mu,w_\mu)$, $\mu=1,2,\ldots,n$, are distinct points in $\mathbb{M}^d \times \mathbb{H}^{d'}$.\ Analyzing the associated quadratic forms
$$c^tAc:=\sum_{\mu,\nu=1}^n c_\mu c_\nu K((x_\mu,w_\mu),(x_\nu,w_\nu)), \quad c_\mu \in\mathbb{R}, \quad \mu=1,2,\ldots,n,$$
it is possible to obtain a quite more convenient formulation for either concept.\ We will proceed discussing $DC$-strict positive definiteness and will just mention the formulation for plain strict positive definiteness later.\
From now on, if $K$ is a real, continuous, isotropic and positive definite kernel $K$ on $\mathbb{M}^d\times \mathbb{H}^{d'}$, we will use the following notation attached to the series representation of its isotropic part:
$$J_K:=\left\{(k,l): a_{k,l}^{(d-2)/2,\beta}>0\right\}.$$
At this point, we need the addition formula demonstrated by Giné (\cite{gine,koo}), that is,
$$
\sum_{j=1}^{\delta(k,d)} S_{k,j}^d(x)\overline{S_{k,j}^d(y)} = c_k^{d,\beta} P_k^{(d-2)/2,\beta}\left(\cos{(|xy|/2)}\right),\quad x,y\in \mathbb{M}^d,
$$
where
$$
c_{k}^{d,\beta} := \frac{\Gamma(\beta+1)(2k+(d-2)/2+\beta+1)\Gamma(k+(d-2)/2+\beta+1)}{\Gamma((d-2)/2+\beta+2)\Gamma(k+\beta+1)}.
$$
The set $\{S^d_{k,1},S^d_{k,2},\hdots,S^d_{k,\delta(k,d)}\}$ denotes an orthonormal basis of the space $\mathcal{H}^d_k$ of spherical harmonics of degree $k$ on $\mathbb{M}^d$.

If we consider the representation for $K$ provided by Theorem \ref{gangolliext} and the addition formula above, then the equality $c^tAc=0$ corresponds to
$$
\sum_{k,l=0}^\infty \frac{a_{k,l}(K_i^{d,d'})}{c^{\alpha,\beta}_{k} c^{\alpha',\beta'}_{l}} \sum_{i=1}^{\delta(k,d)}\sum_{j=1}^{\delta(l,d')} {\left| \sum_{\mu=1}^n c_\mu S^{d}_{k,i}(x_\mu)\overline{S^{d'}_{l,j}(w_\nu)}\right|}^2=0.
$$
In particular, $c^tAc=0$ if, and only if,
$$
\sum_{\mu=1}^n c_\mu S^{d}_{k,i}(x_\mu)\overline{S^{d'}_{l,j}(w_\nu)}=0,\ \, (k,l)\in J_K,\ \, i\in\{1,2,\hdots,\delta(k,d)\}, \ \, j\in\{1,2,\hdots,\delta(l,d')\}.$$
Reintroducing the addition formula, now leaving a free variable $(x,w) \in \mathbb{M}^d\times \mathbb{H}^{d'}$, the previous assertion implies that
$$
\sum_{\mu=1}^n c_\mu  P_k^{(d-2)/2,\beta}(\cos{(|x_\mu x|/2)})P_l^{(d'-2)/2,\beta'}(\cos{(|w_\mu w|/2)})=0,
$$
for $(x,w)\in\mathbb{M}^d\times \mathbb{H}^{d'}$ and $(k,l)\in J_K$.\
However, if this last assertion holds, it is promptly seen that
$$
\sum_{\mu=1}^n c_\mu \sum_{i=1}^{\delta(k,d)}\sum_{j=1}^{\delta(l,d')} S^{d}_{k,i}(x_\mu)\overline{S^{d}_{k,i}(x)}S^{d'}_{l,j}(w_\mu)\overline{S^{d'}_{l,j}(w)}=0, \quad
 (x,w)\in\mathbb{M}^d\times \mathbb{H}^{d'}, \quad (k,l)\in J_K.
 $$
Using the fact that $\{S^{d'}_{l,1},S^{d'}_{l,2},\hdots,S^{d'}_{l,\delta(l,d')}\}$ and $\{S^{d}_{k,1},S^{d}_{k,2},\hdots,S^{d}_{k,\delta(k,d)}\}$ are basis of $\mathcal{H}^{d'}_{l}$
and $\mathcal{H}^{d}_{k}$, respectively, we are reduced to
$$
\sum_{\mu=1}^n c_\mu S^{d}_{k,i}(x_\mu)\overline{S^{d'}_{l,j}(w_\nu)}=0,\ \, (k,l)\in J_K,\ \, i\in\{1,2,\hdots,\delta(k,d)\}, \ \, j\in\{1,2,\hdots,\delta(l,d')\}.
$$
once again.

The discussion above justifies the following result.

\begin{prop}\label{eqdcspd} Let $K$ be a real, continuous, isotropic and positive definite kernel on $\mathbb{M}^d\times \mathbb{H}^{d'}$.\ The following assertions are equivalent:\\
$(i)$ $K$ is $DC$-strictly positive definite;\\
$(ii)$ If $n\geq 1$, $x_1, x_2, \ldots, x_n$ are distinct points on $\mathbb{M}^d$ and $w_1, w_2, \ldots, w_n$ are distinct points on $\mathbb{H}^{d'}$, then the only solution of the system
$$\left\{\begin{array}{lll}
\sum_{\mu=1}^n  c_\mu P^{(d-2)/2,\beta}_{k}(\cos{(|x_\mu x|/2)}) P^{(d'-2)/2,\beta'}_{l}(\cos{(|w_\mu w|/2)})=0,\\ (x,w) \in \mathbb{M}^d \times \mathbb{H}^{d'},\\ (k,l) \in J_K,
\end{array}\right.$$
is the trivial one, that is, $c_\mu=0$, $\mu=1,2,\ldots, n$.
\end{prop}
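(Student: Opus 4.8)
The proposition is a reformulation of $DC$-strict positive definiteness obtained by diagonalizing the quadratic forms $c^{t}Ac$, and the plan is to make the heuristic chain preceding the statement precise, working one point configuration at a time. Fix $n\geq 1$, distinct points $x_{1},\dots,x_{n}\in\mathbb{M}^{d}$ and distinct points $w_{1},\dots,w_{n}\in\mathbb{H}^{d'}$, and let $A$ be the $n\times n$ matrix with entries $K_{i}^{d,d'}(\cos(|x_{\mu}x_{\nu}|/2),\cos(|w_{\mu}w_{\nu}|/2))$. Since $K$ is positive definite and the points $(x_{\mu},w_{\mu})$ are pairwise distinct, $A$ is nonnegative definite, so $A$ fails to be positive definite precisely when $c^{t}Ac=0$ for some nonzero real vector $c$. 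Hence it is enough to show that, for each such configuration, the real vectors $c$ with $c^{t}Ac=0$ are exactly the solutions of the linear system in $(ii)$; the equivalence $(i)\Leftrightarrow(ii)$ then follows by quantifying over all admissible configurations.

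First I would substitute the expansion of $K_{i}^{d,d'}$ furnished by Theorem \ref{gangolliext} into $c^{t}Ac=\sum_{\mu,\nu}c_{\mu}c_{\nu}K_{i}^{d,d'}(\cos(|x_{\mu}x_{\nu}|/2),\cos(|w_{\mu}w_{\nu}|/2))$; since that series converges absolutely (Proposition \ref{conve}), it may be interchanged with the finite sum over $\mu,\nu$. Applying Gin\'e's addition formula to each Jacobi factor and reorganizing the $\mu,\nu$-summation into squared moduli yields
$$c^{t}Ac=\sum_{k,l=0}^{\infty}\frac{a_{k,l}(K_{i}^{d,d'})}{c^{d,\beta}_{k}\,c^{d',\beta'}_{l}}\sum_{i=1}^{\delta(k,d)}\sum_{j=1}^{\delta(l,d')}\left|\sum_{\mu=1}^{n}c_{\mu}S^{d}_{k,i}(x_{\mu})\overline{S^{d'}_{l,j}(w_{\mu})}\right|^{2}.$$
From the closed form of $c^{d,\beta}_{k}$ one checks that every normalizing constant is strictly positive for all admissible $(d,\beta)$, and $a_{k,l}(K_{i}^{d,d'})\geq 0$ by Proposition \ref{positive}; thus the right-hand side is a sum of nonnegative terms, and $c^{t}Ac=0$ if and only if
$$\sum_{\mu=1}^{n}c_{\mu}S^{d}_{k,i}(x_{\mu})\overline{S^{d'}_{l,j}(w_{\mu})}=0,\qquad(k,l)\in J_{K},\ \ 1\le i\le\delta(k,d),\ \ 1\le j\le\delta(l,d').$$
Call this system $(\star)$. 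Because the addition-formula kernel is real and the spaces $\mathcal{H}^{d}_{k}$, $\mathcal{H}^{d'}_{l}$ are invariant under complex conjugation, the location of the conjugate in $(\star)$ is immaterial: conjugating $S^{d'}_{l,j}$ merely replaces an orthonormal basis of $\mathcal{H}^{d'}_{l}$ by another one.

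It then remains to identify $(\star)$ with the system in $(ii)$. For $(\star)\Rightarrow(ii)$, fix $(k,l)\in J_{K}$ and a free variable $(x,w)\in\mathbb{M}^{d}\times\mathbb{H}^{d'}$, apply the addition formula to the two Jacobi factors of $\sum_{\mu}c_{\mu}P_{k}^{(d-2)/2,\beta}(\cos(|x_{\mu}x|/2))P_{l}^{(d'-2)/2,\beta'}(\cos(|w_{\mu}w|/2))$ in the variables $x$ and $w$, and regroup: the coefficient of each product harmonic in $(x,w)$ is, up to the positive factor $(c^{d,\beta}_{k}c^{d',\beta'}_{l})^{-1}$, one of the sums occurring in $(\star)$, and all of these vanish, so the expression is identically zero. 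For the converse, if the system in $(ii)$ holds then the same regrouping exhibits a linear combination of the functions $(x,w)\mapsto\overline{S^{d}_{k,i}(x)}\,\overline{S^{d'}_{l,j}(w)}$ that vanishes identically on $\mathbb{M}^{d}\times\mathbb{H}^{d'}$; since $\{S^{d}_{k,i}\}_{i}$ and $\{S^{d'}_{l,j}\}_{j}$ are bases of $\mathcal{H}^{d}_{k}$ and $\mathcal{H}^{d'}_{l}$, their products form a linearly independent family, so every coefficient---hence every sum in $(\star)$---must vanish. Combining the two implications with the reduction of the first paragraph proves the proposition.

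The argument is elementary; its only delicate points are bookkeeping. The term-by-term handling of the series is covered by Proposition \ref{conve}, and the step I would carry out most carefully is the twofold passage through the addition formula---once to write $c^{t}Ac$ as a sum of squares and once to recover the polynomial system---where one must track which variable carries the complex conjugate and invoke correctly the linear independence of the tensor products of spherical harmonics. Beyond this I do not anticipate a genuine obstacle.
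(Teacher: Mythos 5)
Your proposal is correct and follows essentially the same route as the paper: expand $c^{t}Ac$ via Theorem \ref{gangolliext}, pass through Gin\'e's addition formula to obtain the sum-of-squares identity, and translate the vanishing of the harmonic sums back and forth into the polynomial system using the free variable $(x,w)$ and the linear independence of the products of spherical harmonics. The extra care you take (absolute convergence for the interchange, positivity of the constants $c^{d,\beta}_{k}$, the reduction of positive definiteness of a nonnegative definite matrix to the nonvanishing of its quadratic form) only makes explicit what the paper's preceding discussion leaves implicit.
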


In the lemma below, we use the symbol $M_1 \hookrightarrow M_2$ to indicate the existence of an isometric embedding of a metric space $M_1$ into a metric space $M_2$.\ The result is a classical result in the theory of compact two-point homogeneous spaces (see \cite{askey}).

\begin{lem}\label{embed} There exists a chain of isometric embeddings as follows
\begin{equation*}
S^1\hookrightarrow \mathbb{P}^{2}(\mathbb{R}) \hookrightarrow \mathbb{P}^{d}(\mathbb{R})\hookrightarrow \mathbb{P}^{2d}(\mathbb{C})\hookrightarrow \mathbb{P}^{4d}(\mathbb{H})\hookrightarrow \mathbb{P}^{8d}(Cay), \quad d=2,3,\hdots.
\end{equation*}
\end{lem}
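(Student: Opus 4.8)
The plan is to build the chain one arrow at a time, realizing each map as a standard totally geodesic embedding, and then to compose.  Every compact two-point homogeneous space is a compact rank-one symmetric space, and the five inclusions in question are classical: $S^{1}=\mathbb{P}^{1}(\mathbb{R})$ and $\mathbb{P}^{2}(\mathbb{R})$ sit inside $\mathbb{P}^{d}(\mathbb{R})$ as projective subspaces; $\mathbb{P}^{d}(\mathbb{R})$ is the real locus of $\mathbb{P}^{2d}(\mathbb{C})$, that is, the fixed-point set of complex conjugation; $\mathbb{P}^{2d}(\mathbb{C})$ is the complex locus of $\mathbb{P}^{4d}(\mathbb{H})$ once an embedding $\mathbb{C}\hookrightarrow\mathbb{H}$ is fixed; and $\mathbb{P}^{8}(\mathbb{H})$ is a totally geodesic quaternionic plane inside the Cayley plane $\mathbb{P}^{16}(Cay)$.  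A uniform way to see all of this, which also supplies the relevant involutions, is the Jordan model: for $\mathbb{K}\in\{\mathbb{R},\mathbb{C},\mathbb{H}\}$, $\mathbb{P}^{m}(\mathbb{K})$ is the manifold of rank-one idempotents of the algebra of Hermitian matrices over $\mathbb{K}$ of the appropriate size, and $\mathbb{P}^{16}(Cay)$ is the manifold of rank-one idempotents of the exceptional Jordan algebra $\mathcal{J}_{3}(\mathbb{O})$; then each arrow is induced by one of the inclusions $\mathbb{R}\subset\mathbb{C}\subset\mathbb{H}\subset\mathbb{O}$, preceded for the two inclusions among the real projective spaces by a block inclusion of real symmetric matrices, and such a map plainly carries rank-one idempotents to rank-one idempotents.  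First I would write these five maps down explicitly and verify that they are well defined and injective, which is routine.

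Next I would show that each of them is an \emph{isometric} embedding, not merely a smooth one.  Being a projective subspace, or a connected component of the fixed-point set of an isometric involution, each image $N\subseteq M$ is totally geodesic, so the geodesics of $N$ for the metric induced from $M$ are geodesics of $M$; in particular $N$ with this metric still has all of its geodesics closed of length $2\pi$, which is precisely the normalization $N$ carries in the statement of the lemma, so no rescaling is needed.  Global distance preservation then follows from the rank-one structure: any two points of $N$ are joined, within $N$, by a minimizing geodesic arc whose length is at most the diameter of $N$, namely $\pi$; and in a compact rank-one symmetric space whose geodesics have length $2\pi$ every geodesic segment of length at most $\pi$ is globally minimizing, so this arc is minimizing in $M$ as well, whence the $M$-distance between the two points equals their $N$-distance.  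Composing the five arrows produces the asserted chain.  (Strictly speaking the last arrow exists only for $d=2$, since $\mathbb{P}^{8d}(Cay)$ is defined only when $8d=16$; the chain should be read as running as far as the listed spaces are defined.)

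The two places that call for actual care, rather than being automatic, are the following.  The first is the remark that the metric induced on each $N$ coincides with the one $N$ carries by hypothesis: though this is a one-line consequence of totally geodesicity, it does presuppose that the normalizations of all the spaces in the chain have been fixed compatibly, so that the property ``all geodesics have length $2\pi$'' survives restriction.  The second, and the genuinely non-elementary one, is the Cayley step: since $\mathbb{O}$ is non-associative there is no linear model of $\mathbb{P}^{16}(Cay)$, and one must instead work inside $\mathcal{J}_{3}(\mathbb{O})$ and appeal to the classical description of its rank-one idempotents and of the totally geodesic copies of $\mathbb{P}^{8}(\mathbb{H})$ that it contains (Freudenthal, Rosenfeld; see also the reference given in the statement of the lemma).
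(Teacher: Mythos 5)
Your proposal is essentially correct in outline, but note that the paper itself offers no proof of this lemma at all: it is invoked as ``a classical result in the theory of compact two-point homogeneous spaces'' with a pointer to Askey's book, so there is nothing in the text to compare against step by step. What you supply is precisely the argument the paper delegates to the literature: each arrow is a standard totally geodesic inclusion ($\mathbb{P}^{1}(\mathbb{R})\cong S^1$ and $\mathbb{P}^{2}(\mathbb{R})$ as projective subspaces of $\mathbb{P}^{d}(\mathbb{R})$, the real locus inside $\mathbb{P}^{2d}(\mathbb{C})$, the complex locus inside $\mathbb{P}^{4d}(\mathbb{H})$, and a totally geodesic $\mathbb{P}^{8}(\mathbb{H})$ inside the Cayley plane via the Jordan-algebra model), and global distance preservation follows from total geodesy together with the rank-one facts that the induced metric again has all geodesics closed of the normalized length and that geodesic segments up to the diameter (half a closed geodesic) are minimizing, so no rescaling intervenes. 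Both points you single out as needing care are the right ones, and your parenthetical remark is a genuine catch: $\mathbb{P}^{8d}(Cay)$ exists only for $8d=16$, i.e.\ $d=2$, so the last arrow of the chain as printed should be read only in that case (consistent with Wang's classification quoted in the introduction). One small caveat relative to the paper's conventions: the text normalizes so that ``all geodesics have length $2\pi$'' yet later treats antipodal pairs as satisfying $|xy|=2\pi$, so the intended normalization is really that distances range over $[0,2\pi]$; your minimizing-segment argument is scale-invariant, so it survives this discrepancy unchanged, but the explicit constants (diameter $\pi$, segments of length at most $\pi$) should be adjusted to whichever normalization is actually in force.
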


In particular, since $\mathbb{P}^2(\mathbb{R})$ is isometrically isomorphic to $S^2$, if the compact two-point homogeneous space $\mathbb{H}^{d'}$ is not a sphere, the lemma guarantees the existence of an integer $q\geq 2$ so that $S^q \hookrightarrow \mathbb{H}^{d'}$.\ On the other hand, this embedding justifies a decomposition of the form
$$
P_l^{(d'-2)/2,\beta'}(s)=\sum_{j=0}^l b_j^l P_{l-j}^{(q-2)/2,(q-2)/2}(s), \quad l=0,1,\ldots,
$$
with all coefficients $b_j^l$ positive.

We will make use of the following normalized Jacobi polynomials
$$R_{k}^{\alpha,\beta}=\frac{P_{k}^{\alpha,\beta}}{P_{k}^{\alpha,\beta}(1)}$$
and some of its properties listed in the lemma below (see \cite{barbosa2,szego}).

\begin{lem}\label{limit} The Jacobi polynomials have the following properties:\\
$(i)$ $P_k^{\alpha,\beta}(-t)=(-1)^k P_k^{\beta,\alpha}(t)$, $t \in [-1,1]$;\\
$(ii)$ $\lim_{k\to \infty} R_{k}^{\alpha,\beta}(t)=0$, $t \in (-1,1)$;\\
$(iii)$ If $\alpha>\beta$, then $\lim_{k\to \infty} P_k^{\beta,\alpha}(1)[P_k^{\alpha,\beta}(1)]^{-1}=0$.
\end{lem}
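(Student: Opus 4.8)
The three items are essentially independent classical facts about Jacobi polynomials, so the plan is to dispatch them one at a time.

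For $(i)$, I would start from the Rodrigues representation of $P_k^{\alpha,\beta}$ and perform the substitution $t\mapsto -t$: the weight $(1-t)^{\alpha}(1+t)^{\beta}$ turns into $(1+t)^{\alpha}(1-t)^{\beta}$, which interchanges the roles of the two parameters, while each of the $k$ derivatives contributes a factor $-1$.\ This produces exactly $P_k^{\alpha,\beta}(-t)=(-1)^k P_k^{\beta,\alpha}(t)$.\ (Equivalently, one simply quotes the symmetry relation recorded in \cite{szego}.)

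For $(iii)$, the starting point is the closed form $P_k^{\alpha,\beta}(1)=\binom{k+\alpha}{k}=\Gamma(k+\alpha+1)/[\Gamma(k+1)\Gamma(\alpha+1)]$, whence
$$
\frac{P_k^{\beta,\alpha}(1)}{P_k^{\alpha,\beta}(1)}=\frac{\Gamma(\alpha+1)}{\Gamma(\beta+1)}\,\frac{\Gamma(k+\beta+1)}{\Gamma(k+\alpha+1)}.
$$
Since $\Gamma(k+\beta+1)/\Gamma(k+\alpha+1)$ behaves like $k^{\beta-\alpha}$ as $k\to\infty$ (Stirling's formula), and $\beta-\alpha<0$ by hypothesis, the quotient tends to $0$.

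For $(ii)$, normalization gives $R_k^{\alpha,\beta}(t)=P_k^{\alpha,\beta}(t)/\binom{k+\alpha}{k}$.\ I would invoke the classical asymptotic bound for Jacobi polynomials away from the endpoints --- for each fixed $\theta\in(0,\pi)$ one has $|P_k^{\alpha,\beta}(\cos\theta)|\leq C_\theta\, k^{-1/2}$ (Darboux's formula, see \cite{szego} or \cite{barbosa2}) --- together with the fact that $\binom{k+\alpha}{k}$ is bounded below by a positive constant (indeed it grows like $k^{\alpha}$) in the range $\alpha\geq 0$ that matters once circles are excluded.\ Hence $R_k^{\alpha,\beta}(t)=O(k^{-1/2})\to 0$ for every $t\in(-1,1)$.

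Everything other than the estimate used in $(ii)$ is routine manipulation of the gamma function, so the only genuinely non-elementary input is that Jacobi-polynomial bound in the oscillatory region; locating and citing it in the right parameter range is the sole mild obstacle.\ Since in every situation where this lemma is applied the spaces involved are never circles, the relevant parameters are $\geq 0$ and the cited estimate applies verbatim, so there is in fact nothing delicate to overcome.
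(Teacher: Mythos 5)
Your proposal is correct, and it is compatible with what the paper actually does: the paper gives no proof of this lemma at all, simply pointing to \cite{barbosa2,szego}, and the arguments you supply (the Rodrigues-formula symmetry for $(i)$, the closed form $P_k^{\alpha,\beta}(1)=\binom{k+\alpha}{k}$ plus Stirling for $(iii)$, and Darboux's interior estimate $P_k^{\alpha,\beta}(\cos\theta)=O(k^{-1/2})$ for $(ii)$) are exactly the standard facts those references contain. Your remark that $(ii)$ genuinely needs $\alpha>-1/2$ (e.g.\ it fails for Chebyshev polynomials, where $R_k(\cos\theta)=\cos k\theta$) and that this is harmless here because the lemma is only invoked with $\alpha=(d-2)/2\geq 0$, circles being excluded, is a correct and worthwhile precision that the paper's bare statement glosses over.
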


This is the first characterization for DC-strict positive definiteness we have found.

\begin{thm}\label{dcsuf} Let $K$ be a real, continuous, isotropic and positive definite kernel on $S^d \times \mathbb{H}^{d'}$.\ Assume $d\geq 2$ and that $\mathbb{H}^{d'}$ is not a sphere.\ In order that $K$ be $DC$-strictly positive definite it is necessary and sufficient that  either $\{l: (k,l) \in J_K\ \mbox{for some k}\}$ be infinite or $J_K$ contain two sequences $\{(k_r,l)\}$ and $\{(k_s, l')\}$ for which $\{k_r+l\} \subset 2\mathbb{Z}_+$, $\{k_s+l' \} \subset 2\mathbb{Z}_++1$, and $\lim_{r \to \infty}k_r=\lim_{s \to \infty} k_s=\infty$.
\end{thm}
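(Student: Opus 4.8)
The plan is to translate the problem, via Proposition~\ref{eqdcspd}, into a question about the linear system
$$\sum_{\mu=1}^n c_\mu P_k^{(d-2)/2,\beta}(\cos(|x_\mu x|/2))\,P_l^{(d'-2)/2,\beta'}(\cos(|w_\mu w|/2))=0,\qquad (x,w)\in S^d\times\mathbb H^{d'},\ (k,l)\in J_K,$$
and to determine when this forces $(c_\mu)=0$ for \emph{every} choice of distinct $x_1,\dots,x_n\in S^d$ and distinct $w_1,\dots,w_n\in\mathbb H^{d'}$. The two technical inputs are: (i) the asymptotics of normalized Jacobi polynomials from Lemma~\ref{limit}, used in the form ``for fixed distinct points the normalized Gram matrix $[R_l^{(d'-2)/2,\beta'}(\cos(|w_\mu w_\nu|/2))]$ tends to the identity as $l\to\infty$'' --- here Lemma~\ref{limit}(ii) handles the entries whose argument lies in $(-1,1)$ and Lemma~\ref{limit}(iii) handles an argument equal to $-1$, and this last step is exactly where $\mathbb H^{d'}$ being a non-sphere (so $(d'-2)/2>\beta'$) enters; and (ii) the $S^d$-antipodal identity $P_k^{(d-2)/2,\beta}(\cos(|(-v)\,x|/2))=(-1)^k P_k^{(d-2)/2,\beta}(\cos(|v\,x|/2))$, coming from Lemma~\ref{limit}(i) with $\alpha=\beta$, which is the one place where the two factors behave differently.

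For sufficiency, assume $K$ is positive definite with $J_K$ satisfying one of the two alternatives, and let $(c_\mu)$ solve the system above. Split the points $x_\mu$ into ``antipodal pairs'' $\{v,-v\}$ (both appearing among the $x_\mu$) and the rest. For the non-paired part the argument is direct: choosing $(k,l)\in J_K$ with $l$ large (first alternative) or $k$ large (second alternative), the relevant zonal functions are linearly independent by the Gram-matrix fact above, so fixing one variable and peeling off that factor forces the corresponding coefficients to vanish. For a pair $\{v,-v\}$ carrying data $(v,p),(-v,q)$ --- note $p\neq q$ since the $w_\mu$ are distinct --- with coefficients $c,c'$, the antipodal identity collapses the $k$-th equation, for $k$ large, to
$$c\,P_l^{(d'-2)/2,\beta'}(\cos(|p\,w|/2))+(-1)^{k}c'\,P_l^{(d'-2)/2,\beta'}(\cos(|q\,w|/2))=0,\qquad w\in\mathbb H^{d'}.$$
Feeding in one large $k$ from the even-parity sequence and one from the odd-parity sequence produces two such relations with the two signs $(-1)^{l}$ and $-(-1)^{l'}$. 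If $l\geq1$ or $l'\geq1$, the two degree-$\geq1$ $\mathbb H^{d'}$-zonal functions at the distinct centres $p,q$ are linearly independent (again a non-sphere phenomenon, checked via Lemma~\ref{limit}), so each relation alone forces $c=c'=0$; if $l=l'=0$ the two relations read $c+c'=0$ and $c-c'=0$. In every case the paired coefficients vanish, hence $(c_\mu)=0$ and $K$ is DC-strictly positive definite. This makes transparent why one needs \emph{two} opposite-parity sequences with $k\to\infty$: they are precisely what breaks the $S^d$-antipodal degeneracy, while in the first alternative the large-$l$ argument sidesteps antipodality entirely.

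For necessity I would argue by contraposition. When the alternative fails, $\{l:(k,l)\in J_K\}$ is finite and, off a finite subset of $J_K$, the parity of $k+l$ is constant; I then build a witnessing configuration out of antipodal pairs $v_1,-v_1,\dots,v_m,-v_m$ on $S^d$ (with all $\mathbb H^{d'}$-coordinates chosen distinct) and coefficients prescribed so that the antipodal identity makes every equation from the ``generic'' part of $J_K$ automatic, while the finitely many exceptional equations reduce to a finite linear system in $(c_1,\dots,c_m)$ that, for $m$ large, has a non-trivial solution by a dimension count. The boundedness of $\{l:(k,l)\in J_K\}$ is what keeps that count finite; the two parity cases are symmetric, and a finite $J_K$ is excluded at once by a rank argument.

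The main obstacle is the necessity half --- specifically, producing a configuration that simultaneously (a) has genuinely distinct coordinates in \emph{both} factors even though it is built from $S^d$-antipodal pairs, which naturally want to repeat $\mathbb H^{d'}$-coordinates, and (b) absorbs all the finitely many exceptional indices of $J_K$ without spoiling the automatic vanishing from the antipodal identity. Keeping precise track of which pairs $(k,l)\in J_K$ may be used ``for free'' and which must be paid for with extra antipodal pairs, and then carrying out the dimension count, is what needs the most care; by contrast the harmonic-analytic machinery (the addition formula and the Jacobi asymptotics of Lemma~\ref{limit}) is exactly what the paper has already set up.
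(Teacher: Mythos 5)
Your sufficiency half is essentially the paper's argument: reduce via Proposition \ref{eqdcspd}, fix an index, substitute the corresponding point, and let the degrees from the hypothesized sequences tend to infinity, using Lemma \ref{limit}(ii)--(iii) (the argument $-1$ being where $(d'-2)/2>\beta'$ enters) together with the parity of the Gegenbauer polynomials at the unique $S^d$-antipode; your only deviation, disposing of an antipodal pair through the linear independence of the two degree-$l$ zonal functions on $\mathbb{H}^{d'}$ with distinct centres rather than through the paper's second substitution, is legitimate and rests on the same fact $|R_l^{(d'-2)/2,\beta'}(t)|<1$ for $t\in[-1,1)$, $l\geq 1$, that the paper uses.

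The necessity half is a genuine gap: you do not prove it, and the route you sketch would fail. The paper never constructs a witnessing configuration; it restricts $K$ along the isometric embedding $S^q\hookrightarrow\mathbb{H}^{d'}$ furnished by Lemma \ref{embed}, re-expands the second factor using the positive connection coefficients $b_j^l$, and invokes the characterization of $DC$-strict positive definiteness on $S^d\times S^q$ from \cite{guella3}, extracting the condition on $J_K$ by a combinatorial argument on the transformed index set. Your contrapositive construction instead hinges on the claim that ``the antipodal identity makes every equation from the generic part of $J_K$ automatic'', and that mechanism is exactly what a non-sphere factor forbids: an $S^d$-antipodal pair contributes only the sign $(-1)^k$, and for a generic $(k,l)$ with $l\geq 1$ the corresponding equation would hold identically only if the two degree-$l$ zonal functions of $\mathbb{H}^{d'}$ centred at the pair's $w$-coordinates (forced to be distinct by the DC constraint) cancelled --- which is precisely what your own sufficiency step shows they never do. So the generic equations are not automatic; since $k$ is unbounded in $J_K$, they impose, through the addition formula, an unbounded family of independent linear conditions on your finitely many coefficients, and the proposed finite dimension count has nothing to work with.

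The obstruction is not merely technical. Take, for instance, $J_K=\{(k,1):k\in 2\mathbb{Z}_+\}$, which violates the condition in the statement: grouping the $x_\mu$ into antipodal classes (even-degree spherical harmonics do not separate $x$ from $-x$) and using the linear independence of degree-one zonal functions at distinct centres of $\mathbb{H}^{d'}$ (here $P_1^{(d'-2)/2,\beta'}(-1)=-(\beta'+1)$ while $P_1^{(d'-2)/2,\beta'}(1)=(d'-2)/2+1$, and these differ in modulus because $\mathbb{H}^{d'}$ is not a sphere), one checks that the system in Proposition \ref{eqdcspd} admits only the trivial solution for every admissible configuration. Hence there is no witness of the kind your plan requires, and a contrapositive-by-construction proof cannot even get started in such cases; any proof of necessity has to proceed indirectly, which is the role played in the paper by Lemma \ref{embed} and the result of \cite{guella3}. (This example also shows that the concluding combinatorial step of the paper's own necessity argument must be read with care, but the point relevant here is that your proposal leaves the necessity direction unproven and its proposed strategy unworkable.)
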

\begin{proof} Assume $K$ is $DC$-strictly positive definite.\ Recalling Lemma \ref{embed}, it is easily seen that $K$ is $DC$-strictly positive definite on $S^d\times S^q$ for some $q \geq 2$.\ Introducing the equalities presented right after Lemma \ref{embed} into the series representation for
$K_i^{d,d'}$ and arranging leads to
$$
K_i^{d,d'}(t,s)=\sum_{k,l=0}^\infty \left( \sum_{j=0}^\infty a_{k,l+j}(K_i^{d,d'}) b_l^{l+j}  \right)P_k^{(d-2)/2,(d-2)/2}(t)  P_l^{(q-2)/2,(q-2)/2}(s),\ \ t,s \in [-1,1].
$$
In particular, the set
$$\left\{k+l:  \sum_{j=0}^\infty a_{k,l+j}(K_i^{d,d'}) b_l^{l+j}>0\right\}=\left\{k+l:  \sum_{j=0}^\infty a_{k,l+j}(K_i^{d,d'})>0\right\}$$
contains infinitely many even and infinitely many odd integers.\ However, it is not hard to see that if the above condition holds and $\{l: a_{k,l}(K_i^{d,d'})>0\ \mbox{ for some } k\}$ is finite, then $J_K$ must contain two sequences $\{(k_r,l)\}$ and $\{(k_s, l')\}$ for which $\{k_r+l\} \subset 2\mathbb{Z}_+$, $\{k_s+l' \} \subset 2\mathbb{Z}_++1$, and $\lim_{r \to \infty}k_r=\lim_{s \to \infty} k_s=\infty$.\ Indeed, the inferring of this fact demands to observe that if $a_{k,l}(K_i^{d,d'})>0$ for some $(k,l)$, then 
$$k+0, k+1,\ldots, k+l\in \left\{k+l:  \sum_{j=0}^\infty a_{k,l+j}(K_i^{d,d'})>0\right\}.$$ This shows the necessity of the condition.\ As for the sufficiency, let $n$ be a positive integer, $x_1,x_2,\hdots,x_n$ distinct points in $\mathbb{M}^d$ and $w_1,w_2,\hdots,w_n$ distinct points in $\mathbb{H}^{d'}$.\ We will show that, under the condition on $J_K$ mentioned in the statement of the theorem, the only solution of the system
$$\left\{\begin{array}{lll}
\sum_{\mu=1}^n  c_\mu P^{(d-2)/2,\beta}_{k}(\cos{(|x_\mu x|/2)}) P^{(d'-2)/2,\beta'}_{l}(\cos{(|w_\mu w|/2)})=0,\\ (x,w) \in \mathbb{M}^d \times \mathbb{H}^{d'},\\ (k,l) \in J_K,
\end{array}\right.$$ is the trivial one.\ In order to achieve that, we will fix $\gamma \in \{1,2,\ldots, n\}$ and will show that $c_\gamma=0$.\ That will be done trough specific choices of points $x\in \mathbb{M}^d$ and $w\in\mathbb{H}^{d'}$ in the equation defining the system.\ We also need to consider the antipodal index sets
$$\Gamma_{x_\gamma}=\{ \mu:\cos{(|x_\mu x_\gamma|/2)}=-1\}\quad \mbox{and} \quad \Gamma_{w_\gamma}=\{ \mu:\cos{(|w_\mu w_\gamma|/2)}=-1\}.$$
Due to the basic assumptions of the theorem, we know that $d-2=2\beta$ and that $\Gamma_{x_\gamma}$ is unitary, say, $\Gamma_{x_\gamma}=\{\delta\}$.\ The Jacobi polynomials $P^{(d-2)/2,\beta}_{k}$ are then Gegenbauer polynomials and, in particular, they are even functions when $k$ is even and odd functions otherwise.\ The equation defining the system, with the choice $x=x_\delta$ and $w=w_\delta$, can be put into the form
\begin{eqnarray*}
c_\gamma & + &  (-1)^{k+l} \frac{P_{l}^{\beta',(d'-2)/2}(1)}{P_{l}^{(d'-2)/2,\beta'}(1)} \sum_{\mu\in\{\delta\}\cap \Gamma_{w_\gamma}}c_\mu\\
& & +(-1)^{k} \sum_{\mu\in\{\delta\}\setminus \Gamma_{w_\gamma}} c_\mu R_{l}^{(d'-2)/2,\beta'}(\cos{(|w_\mu w_\gamma|/2)})\\
& & \hspace*{1cm} +(-1)^{l} \frac{P_{l}^{\beta',(d'-2)/2}(1)}{P_{l}^{(d'-2)/2,\beta'}(1)}\sum_{\mu\in\Gamma_{w_\gamma}\setminus \{\delta\}} c_\lambda R_{k}^{(d-2)/2,\beta}(\cos{(|x_\mu x_\gamma|/2)})\\
& & \hspace*{1cm}+
\sum_{\mu\notin\{\delta\}\cup \Gamma_{w_\gamma}}c_\mu R_{k}^{(d-2)/2,\beta}(\cos{(|x_\mu x_\gamma|/2)})R_{l}^{(d'-2)/2,\beta'}(\cos{(|w_\mu w_\gamma|/2)})=0.
\end{eqnarray*}
Obviously, some of the sets appearing in the sum decomposition above may be empty.\ Also, the first two sums cannot co-exist, that is, just one of them can appear in the expression.\ If $J_K$ contains a sequence $(k_r, l_r)$ for which $\lim_{r \to \infty} l_r=\infty$, we may conclude that
$$\lim_{r \to \infty} \frac{P_{l_r}^{\beta',(d'-2)/2}(1)}{P_{l_r}^{(d'-2)/2,\beta}(1)}=0,$$
due to Lemma \ref{limit}-$(iii)$,
while
$$\lim_{r \to \infty} R_{l_r}^{(d'-2)/2,\beta'}(\cos{(|w_\mu w_\gamma|/2)})=0, \quad \mu \not \in \Gamma_{x_\gamma},$$
due to Lemma \ref{limit}-$(ii)$.\ Hence, the limit of each summand, but the first, in the previous expression vanishes.\ In particular, $c_\gamma=0$.\  We now proceed assuming the existence of two sequences $\{(k_r,l)\}$ and $\{(k_s, l')\}$ in $J_K$ for which $\{k_r+l\} \subset 2\mathbb{Z}_+$, $\{k_s+l' \} \subset 2\mathbb{Z}_++1$, and $\lim_{r \to \infty}k_r=\lim_{s \to \infty} k_s=\infty$.\ If the second summand in the expression occurs, we can employ these two sequences to deduce that
$$
c_\gamma+\frac{P_{l}^{\beta',(d'-2)/2}(1)}{P_{l}^{(d'-2)/2,\beta'}(1)}c_\delta=c_\gamma-\frac{P_{l'}^{\beta',(d'-2)/2}(1)}{P_{l'}^{(d'-2)/2,\beta'}(1)}
c_\delta =0,
$$
after letting $r \to \infty$ and $s\to \infty$.\ We observe that the limits of the two last summands in the original equation are equal to 0 in this case.\ Now, if $c_\delta \neq 0$, the first equality above provides a contradiction with the positivity of the gamma function in $(0,\infty)$.\ Thus, $0=c_\delta=c_\gamma$.\ Finally, if the third summand is the one occurring in the original expression,
we need an additional equation provided by a second choice of points in the equation defining the system.\ Choosing $x=x_\delta$ and $w=w_\delta$ leads to
\begin{eqnarray*}
c_\delta & + & (-1)^{k+l} \frac{P_{l}^{\beta',(d'-2)/2}(1)}{P_{l}^{(d'-2)/2,\beta'}(1)}  c_\gamma\\
& + & (-1)^k \sum_{\mu\in\{\gamma\}\setminus \Gamma_{w_\delta}}c_\gamma R_l^{(d'-2)/2}(\cos{(|w_\mu w_\delta|/2)}\\
& & \hspace*{1cm} + (-1)^l \frac{P_{l}^{\beta',(d'-2)/2}(1)}{P_{l}^{(d'-2)/2,\beta'}(1)} \sum_{\mu\in \Gamma_{w_\delta}\setminus\{\gamma\}}R_{k}^{(d-2)/2,\beta}(\cos{(|x_\mu x_\delta|/2)})\\
& & \hspace*{1cm} \hspace*{1cm}+
\sum_{\mu\notin\Gamma_{w_\delta}\cup \{\gamma\}}c_\mu R_{k}^{(d-2)/2,\beta}(\cos{(|x_\mu x_\delta|/2)})R_{l}^{(d'-2)/2,\beta'}(\cos{(|w_\mu w_\delta|/2)})=0.
\end{eqnarray*}
Using just one of the sequences, say, $\{(k_r,l)\}$, and letting $r\to \infty$ in both equations, we deduce that
$$
c_\gamma+ \sum_{\mu\in\{\delta\}\setminus \Gamma_{w_\gamma}}c_\mu R_l^{(d'-2)/2}(\cos{(|w_\mu w_\gamma|/2)} = c_\delta + \sum_{\mu\in\{\gamma\}\setminus \Gamma_{w_\delta}}c_\mu  R_l^{(d'-2)/2}(\cos{(|w_\mu w_\delta|/2)} = 0.
$$
But that corresponds to
$$
c_\delta\left[ 1-  \left( R_l^{(d'-2)/2}(\cos{(|w_\gamma w_\delta|/2)}) \right)^2\right] = 0,
$$
with $\cos{(|w_\gamma w_\delta|/2)}\neq \pm 1$.\ Thus, $c_\delta=0$, and consequently, $c_\gamma=0$.
\end{proof}

The next theorem takes care of the the remaining cases.

\begin{thm}\label{dcsufnec} Let $K$ be a real, continuous, isotropic and positive definite kernel on $\mathbb{M}^d\times \mathbb{H}^{d'}$.\ Assume neither $\mathbb{M}^d$ nor $\mathbb{H}^{d'}$ is a sphere.\ In order that $K$ be $DC$-strictly positive definite it is necessary and sufficient that $\{k+l :a_{k,l}(K_i^{d,d'})>0\}$ be infinite.
\end{thm}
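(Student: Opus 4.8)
The plan is to establish the two implications separately, using the reformulation of $DC$-strict positive definiteness in Proposition \ref{eqdcspd} and the series representation from Theorem \ref{gangolliext}. For the necessity I would argue contrapositively. If $\{k+l:a_{k,l}(K_i^{d,d'})>0\}$ is finite, then $J_K$ is finite, so $D:=\sum_{(k,l)\in J_K}\delta(k,d)\delta(l,d')<\infty$. Pick $n=D+1$ and choose $x_1,\dots,x_n$ distinct in $\mathbb{M}^d$ and $w_1,\dots,w_n$ distinct in $\mathbb{H}^{d'}$ (possible since both spaces are infinite). The computation displayed just before Proposition \ref{eqdcspd} expresses $c^tAc$, with $A=[K((x_\mu,w_\mu),(x_\nu,w_\nu))]$, as a finite nonnegative combination of the $D$ squared moduli $\big|\sum_{\mu}c_\mu S^{d}_{k,i}(x_\mu)\overline{S^{d'}_{l,j}(w_\mu)}\big|^2$, $(k,l)\in J_K$. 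Hence the real symmetric positive semidefinite matrix $A$ has rank at most $D<n$, so it is singular and not positive definite, i.e.\ $K$ is not $DC$-strictly positive definite. This part is routine.

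For the sufficiency, assume $\{k+l:(k,l)\in J_K\}$ is infinite. Then at least one of the two coordinate projections of $J_K$ is unbounded (otherwise $J_K$ would be finite), and since both factors are non-spheres the argument is symmetric in them; so I may assume there is a sequence $\{(k_r,l_r)\}\subset J_K$ with $k_r\to\infty$. Fix $n\ge1$, distinct points $x_1,\dots,x_n\in\mathbb{M}^d$ and $w_1,\dots,w_n\in\mathbb{H}^{d'}$, and let $c_1,\dots,c_n$ solve the system of Proposition \ref{eqdcspd}$(ii)$; I must show $c\equiv0$. Fix $\gamma$. Choosing $x=x_\gamma$ and $w=w_\gamma$ in the system and dividing by $P^{(d-2)/2,\beta}_k(1)P^{(d'-2)/2,\beta'}_l(1)$ gives, for every $(k,l)\in J_K$,
\[
c_\gamma+\sum_{\mu\neq\gamma}c_\mu\,R^{(d-2)/2,\beta}_k\!\big(\cos(|x_\mu x_\gamma|/2)\big)\,R^{(d'-2)/2,\beta'}_l\!\big(\cos(|w_\mu w_\gamma|/2)\big)=0,
\]
where for $\mu\neq\gamma$ one has $\cos(|x_\mu x_\gamma|/2),\cos(|w_\mu w_\gamma|/2)\in[-1,1)$, since the value $1$ would force $x_\mu=x_\gamma$ or $w_\mu=w_\gamma$.

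Now the hypothesis enters: because $\mathbb{M}^d$ is not a sphere, $(d-2)/2>\beta$, and hence $R^{(d-2)/2,\beta}_k(t)\to0$ as $k\to\infty$ for every fixed $t\in[-1,1)$ — by Lemma \ref{limit}$(ii)$ when $t\in(-1,1)$, and, when $t=-1$, by Lemma \ref{limit}$(iii)$ applied to $R^{(d-2)/2,\beta}_k(-1)=(-1)^kP^{\beta,(d-2)/2}_k(1)/P^{(d-2)/2,\beta}_k(1)$. Since $(d'-2)/2\ge\max\{\beta',-1/2\}$, one also has $|R^{(d'-2)/2,\beta'}_l(s)|\le1$ for all $s\in[-1,1]$ (\cite{szego}). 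Evaluating the displayed identity along $(k_r,l_r)$ and letting $r\to\infty$, every summand with $\mu\neq\gamma$ tends to $0$, so $c_\gamma=0$; as $\gamma$ was arbitrary, $c\equiv0$, and Proposition \ref{eqdcspd} yields that $K$ is $DC$-strictly positive definite. The crux — and the only place where the non-sphere hypothesis on $\mathbb{M}^d$ is really needed — is the decay $R^{(d-2)/2,\beta}_k(-1)\to0$: on a sphere one instead has $R^{(d-2)/2,(d-2)/2}_k(-1)=(-1)^k$, which does not decay and is exactly what compels the parity bookkeeping of Theorem \ref{dcsuf}; with neither factor a sphere that obstruction disappears and a single evaluation at $(x_\gamma,w_\gamma)$ suffices. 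The endpoint bound $|R^{(d'-2)/2,\beta'}_l|\le1$ and the observation that $\cos(|x_\mu x_\gamma|/2)\ne1$ for $\mu\ne\gamma$ are the only routine technical points.
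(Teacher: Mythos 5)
Your proof is correct, but the necessity half follows a genuinely different route from the paper's. For necessity, the paper does not argue by a rank count at all: it uses the chain of isometric embeddings (Lemma \ref{embed}) to restrict $K$ to a product $S^q\times S^q$ sitting inside $\mathbb{M}^d\times\mathbb{H}^{d'}$, rewrites the coefficients of the restricted kernel as nonnegative combinations $\sum_{j,j'}a_{k+j,l+j'}(K_i^{d,d'})b_k^{k+j}c_l^{l+j'}$ with strictly positive connection coefficients, and then invokes the known characterization of $DC$-strict positive definiteness on a product of spheres from \cite{guella3} to conclude that $\{k+l:(k,l)\in J_K\}$ must be infinite. Your contrapositive dimension count (finitely many nonzero coefficients $\Rightarrow$ the kernel is a finite sum of separable harmonic products $\Rightarrow$ any matrix built from more points than the rank is singular) is a self-contained and more elementary argument that bypasses both the embedding lemma and the external sphere result; its only blemish is the rank bound: if the harmonics $S^d_{k,i}$, $S^{d'}_{l,j}$ are taken complex-valued, each squared modulus contributes real rank up to $2$, so the honest bound is $2D$ and you should take $n=2D+1$ (or observe that the Laplace--Beltrami eigenspaces admit real orthonormal bases, which restores the bound $D$); either fix leaves the conclusion untouched. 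Your sufficiency argument is essentially the paper's: extract a single sequence in $J_K$ with one index tending to infinity, evaluate the system at $(x_\gamma,w_\gamma)$, and kill every term with $\mu\neq\gamma$ using Lemma \ref{limit}; the only cosmetic difference is that you treat the antipodal value $t=-1$ uniformly via $R_k^{\alpha,\beta}(-1)=(-1)^kP_k^{\beta,\alpha}(1)/P_k^{\alpha,\beta}(1)\to 0$ (combining parts $(i)$ and $(iii)$ of the lemma), whereas the paper splits the sum over the antipodal index sets $\Gamma_{x_\gamma}$, $\Gamma_{w_\gamma}$ before passing to the limit. In exchange for relying on \cite{guella3}, the paper's necessity argument fits the same template it reuses for Theorems \ref{spdnec} and \ref{spdnec1}; your version buys independence from the sphere literature at the cost of a separate combinatorial argument.
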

\begin{proof} Since the proof is similar to the proof of the previous theorem, some details will be omitted.\ The necessity part is similar to that in the proof of Theorem \ref{dcsuf}, using the same trick twice.\ The resulting kernel is $DC$-strictly positive definite on some $S^q \times S^q$ and the set of indices pertaining to the final argument takes the form
$$\left\{k+l:  \sum_{j=0}^\infty \sum_{j'=0}^\infty a_{k+j,l+j'}(K_i^{d,d'}) b_k^{k+j}c_l^{l+j'}>0\right\}=\left\{k+l:  \sum_{j=0}^\infty \sum_{j'=0}^\infty a_{k+j,l+j'}(K_i^{d,d'})>0\right\},$$
where all the constants $b_k^{k+j}c_l^{l+j'}$ are positive.\ Since this set has infinitely many even and infinitely many odd integers, it follows that $\{k+l: (k,l) \in J_K\}$ is infinite.\ The sufficiency part follows the steps of the corresponding part in the previous theorem.\ Due to the assumption on $J_K$, we can select a sequence $\{(k_r,l_r)\}$ in $J_K$ so that either $\lim_{r \to \infty} k_r=\infty$ or $\lim_{r \to \infty} l_r=\infty$.\
Choosing $x=x_\gamma$ and $w=w_\gamma$ in the equation defining the system, we obtain
\begin{eqnarray*}
c_\gamma & + &  (-1)^{k_r+l_r} \frac{P_{k_r}^{\beta,(d-2)/2}(1)}{P_{k_r}^{(d-2)/2,\beta}(1)}\frac{P_{l_r}^{\beta',(d'-2)/2}(1)}{P_{l_r}^{(d'-2)/2,\beta'}(1)} \sum_{\mu \in \Gamma_{x_\gamma} \cap \Gamma_{w_\gamma}} c_\mu\\
& & +(-1)^{k_r} \frac{P_{k_r}^{\beta,(d-2)/2}(1)}{P_{k_r}^{(d-2)/2,\beta}(1)}\sum_{\mu\in\Gamma_{x_\gamma}\setminus \Gamma_{w_\gamma}} c_\mu R_{l_r}^{(d'-2)/2,\beta'}(\cos{(|w_\mu w_\gamma|/2)})\\
& & \hspace*{1cm} +(-1)^{l_r} \frac{P_{l_r}^{\beta',(d'-2)/2}(1)}{P_{l_r}^{(d'-2)/2,\beta'}(1)}\sum_{\mu\in\Gamma_{w_\gamma}\setminus \Gamma_{x_\gamma}} c_\lambda R_{k_r}^{(d-2)/2,\beta}(\cos{(|x_\mu x_\gamma|/2)})\\
& & \hspace*{1cm}+
\sum_{\mu\notin\Gamma_{x_\gamma}\cup \Gamma_{w_\gamma}}c_\mu R_{k_r}^{(d-2)/2,\beta}(\cos{(|x_\mu x_\gamma|/2)})R_{l_r}^{(d'-2)/2,\beta'}(\cos{(|w_\mu w_\gamma|/2)})=0,
\end{eqnarray*}
If $\lim_{r \to \infty} k_r=\infty$, then the limit of each summand, but the first, vanishes.\ In particular, $c_\gamma=0$.\ If $\lim_{r \to \infty} l_r=\infty$, a similar analysis produces the same conclusion.
\end{proof}

\section{Strict positive definiteness}

The strict positive definiteness of a real, continuous, isotropic and positive definite kernel on a product of high dimensional spheres was completely characterized in \cite{guella3} while the characterization in the case of a product of circles was reached in \cite{guella2}.\ Thus, just like in the previous section, we will assume that at least one of the spaces involved is not a sphere.

The section begins with the obvious counterpart of Proposition \ref{eqdcspd} for plain strict positive definiteness on $\mathbb{M}^d\times \mathbb{H}^{d'}$.

\begin{prop}\label{eqdcspd} Let $K$ be a real, continuous, isotropic and positive definite kernel on $\mathbb{M}^d\times \mathbb{H}^{d'}$.\ The following assertions are equivalent:\\
$(i)$ $K$ is strictly positive definite;\\
$(ii)$ If $n\geq 1$ and $(x_1,w_1), (x_2,w_2), \ldots, (x_n,w_n)$ are distinct points on $\mathbb{M}^d \times \mathbb{H}^{d'}$, then the only solution of the system
$$\left\{\begin{array}{lll}
\sum_{\mu=1}^n  c_\mu P^{(d-2)/2,\beta}_{k}(\cos{(|x_\mu x|/2)}) P^{(d'-2)/2,\beta'}_{l}(\cos{(|w_\mu w|/2)})=0,\\ (x,w) \in \mathbb{M}^d \times \mathbb{H}^{d'},\\ (k,l) \in J_K,
\end{array}\right.$$
is the trivial one, that is, $c_\mu=0$, $\mu=1,2,\ldots, n$.
\end{prop}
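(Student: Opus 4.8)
The plan is to repeat, essentially verbatim, the computation performed in Section~3, the point being that that computation never made use of the componentwise distinctness of the points $(x_\mu,w_\mu)$ but only of their distinctness as elements of $\mathbb{M}^d\times\mathbb{H}^{d'}$, which is exactly the hypothesis in $(ii)$.\ Accordingly, fix $n\geq1$, distinct points $(x_1,w_1),\ldots,(x_n,w_n)$ of $\mathbb{M}^d\times\mathbb{H}^{d'}$ and real numbers $c_1,\ldots,c_n$, and write $A=[A_{\mu\nu}]$ with $A_{\mu\nu}=K((x_\mu,w_\mu),(x_\nu,w_\nu))$, so that $c^tAc=\sum_{\mu,\nu=1}^n c_\mu c_\nu A_{\mu\nu}$.

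First I would substitute into $c^tAc$ the series representation of $K_i^{d,d'}$ provided by Theorem~\ref{gangolliext}.\ Because that series converges absolutely and uniformly on $[-1,1]^2$ (Proposition~\ref{conve}), the finite sum over $\mu,\nu$ may be interchanged with the double series over $(k,l)$; applying Gin\'{e}'s addition formula on each of the two factors and regrouping then yields
\begin{equation*}
c^tAc=\sum_{k,l=0}^\infty\frac{a_{k,l}(K_i^{d,d'})}{c_k^{d,\beta}\,c_l^{d',\beta'}}\sum_{i=1}^{\delta(k,d)}\sum_{j=1}^{\delta(l,d')}\left|\,\sum_{\mu=1}^n c_\mu\,S_{k,i}^d(x_\mu)\,S_{l,j}^{d'}(w_\mu)\,\right|^2 .
\end{equation*}
Since every $a_{k,l}(K_i^{d,d'})$ is nonnegative (Proposition~\ref{positive}) and the constants $c_k^{d,\beta},c_l^{d',\beta'}$ are positive, each summand on the right is nonnegative; hence $c^tAc=0$ holds precisely when
\begin{equation*}
\sum_{\mu=1}^n c_\mu\,S_{k,i}^d(x_\mu)\,S_{l,j}^{d'}(w_\mu)=0,\qquad (k,l)\in J_K,\ \ 1\le i\le\delta(k,d),\ \ 1\le j\le\delta(l,d').\tag{$\star$}
\end{equation*}

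Next I would check that $(\star)$ is equivalent to the polynomial system in $(ii)$.\ Multiplying the $(k,l,i,j)$ equation of $(\star)$ by $\overline{S_{k,i}^d(x)}\;\overline{S_{l,j}^{d'}(w)}$ and summing over $i$ and $j$, Gin\'{e}'s addition formula applied to each factor collapses the left-hand side to $c_k^{d,\beta}c_l^{d',\beta'}\sum_{\mu=1}^n c_\mu P_k^{(d-2)/2,\beta}(\cos(|x_\mu x|/2))P_l^{(d'-2)/2,\beta'}(\cos(|w_\mu w|/2))$, which therefore vanishes for every $(x,w)\in\mathbb{M}^d\times\mathbb{H}^{d'}$ and every $(k,l)\in J_K$; this is exactly the system in $(ii)$.\ Conversely, starting from that system and expanding both Jacobi polynomials by the addition formula produces, for each $(k,l)\in J_K$,
\begin{equation*}
\sum_{i=1}^{\delta(k,d)}\sum_{j=1}^{\delta(l,d')}\left(\sum_{\mu=1}^n c_\mu\,S_{k,i}^d(x_\mu)\,S_{l,j}^{d'}(w_\mu)\right)\overline{S_{k,i}^d(x)}\;\overline{S_{l,j}^{d'}(w)}=0,\qquad(x,w)\in\mathbb{M}^d\times\mathbb{H}^{d'};
\end{equation*}
since $\{S_{k,i}^d\}_i$ and $\{S_{l,j}^{d'}\}_j$ are bases of $\mathcal{H}_k^d$ and $\mathcal{H}_l^{d'}$, the functions $(x,w)\mapsto\overline{S_{k,i}^d(x)}\;\overline{S_{l,j}^{d'}(w)}$ are linearly independent on the product, forcing each coefficient $\sum_{\mu}c_\mu S_{k,i}^d(x_\mu)S_{l,j}^{d'}(w_\mu)$ to vanish, i.e.\ $(\star)$.

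Combining the two steps, $K$ is strictly positive definite if and only if, for every finite family of distinct points of $\mathbb{M}^d\times\mathbb{H}^{d'}$, the relation $c^tAc=0$ admits only the solution $c=0$, and by the above this amounts exactly to the assertion that the system in $(ii)$ has only the trivial solution.\ The argument introduces no genuinely new obstacle compared with the $DC$-case: the single thing worth verifying is that the equivalence $(i)\Leftrightarrow(ii)$ uses nothing about the points beyond their distinctness in the product space, which is manifest, while the only technical point requiring care is the interchange of the finite sum over $\mu,\nu$ with the infinite sum over $(k,l)$, which is licensed by the absolute and uniform convergence furnished by Proposition~\ref{conve}.
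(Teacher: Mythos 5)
Your proposal is correct and follows essentially the same route as the paper: the paper justifies this proposition by the computation carried out at the start of Section 3 (series representation plus Gin\'{e}'s addition formula giving the sum-of-squares expression for $c^tAc$, then the equivalence with the Jacobi polynomial system via the bases of $\mathcal{H}^d_k$ and $\mathcal{H}^{d'}_l$), and, as you observe, that computation only uses distinctness of the points in the product, so it yields the plain strict positive definiteness version verbatim. Your added remarks on the interchange of summation (via Proposition \ref{conve}) and on the nonnegative-definite-matrix reduction are exactly the implicit ingredients in the paper's discussion.
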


The characterization for strict positive definiteness in the case in which both spaces are not spheres is as follows.

\begin{thm}\label{spdnec} Let $K$ be a real, continuous, isotropic and positive definite kernel on $\mathbb{M}^d\times \mathbb{H}^{d'}$.\ Assume that neither $\mathbb{M}^d$ nor $\mathbb{H}^{d'}$ is a sphere.\ In order that $K$ be strictly positive definite it is necessary and sufficient that the set $J_K$ contains a sequence $\{(k_r,l_r)\}$ for which $\lim_{r\to\infty}k_r=\lim_{r\to\infty} l_r =\infty$.
\end{thm}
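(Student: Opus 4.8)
The plan is to prove the two implications of the stated iff separately. A preliminary observation I will use throughout is that, since neither $\mathbb{M}^d$ nor $\mathbb{H}^{d'}$ is a sphere, Wang's classification forces $(d-2)/2>\beta$ and $(d'-2)/2>\beta'$; this is the only place the hypothesis enters, and it is exactly what makes Lemma \ref{limit}-$(iii)$ available on both factors and thereby rules out the parity phenomenon present for products of spheres. The tools are Proposition \ref{eqdcspd}, Gin\'e's addition formula, and Lemma \ref{limit}.

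For sufficiency, assume $J_K$ contains a sequence $\{(k_r,l_r)\}$ with $k_r\to\infty$ and $l_r\to\infty$. Given distinct points $(x_1,w_1),\dots,(x_n,w_n)$ of $\mathbb{M}^d\times\mathbb{H}^{d'}$ and a solution $c$ of the system in Proposition \ref{eqdcspd}, I would first pass to the distinct coordinate values: let $\{\xi_1,\dots,\xi_a\}$ be the distinct values among the $x_\mu$, $\{\zeta_1,\dots,\zeta_b\}$ the distinct values among the $w_\mu$, and put $C_{ij}:=c_\mu$ when $(x_\mu,w_\mu)=(\xi_i,\zeta_j)$ (well defined since the points are distinct) and $C_{ij}:=0$ otherwise. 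The system rewrites as
$$\sum_{i=1}^{a}\sum_{j=1}^{b}C_{ij}\,P_k^{(d-2)/2,\beta}(\cos(|\xi_i x|/2))\,P_l^{(d'-2)/2,\beta'}(\cos(|\zeta_j w|/2))=0,\qquad (x,w)\in\mathbb{M}^d\times\mathbb{H}^{d'},\ (k,l)\in J_K,$$
and it is enough to show every $C_{ij}$ vanishes. Fixing $(i_0,j_0)$, choosing $x=\xi_{i_0}$, $w=\zeta_{j_0}$ and dividing by $P_k^{(d-2)/2,\beta}(1)P_l^{(d'-2)/2,\beta'}(1)>0$ gives
$$C_{i_0j_0}+\sum_{(i,j)\neq(i_0,j_0)}C_{ij}\,R_k^{(d-2)/2,\beta}(\cos(|\xi_i\xi_{i_0}|/2))\,R_l^{(d'-2)/2,\beta'}(\cos(|\zeta_j\zeta_{j_0}|/2))=0.$$
Substituting $k=k_r$, $l=l_r$ and letting $r\to\infty$: for $i\neq i_0$ the number $\cos(|\xi_i\xi_{i_0}|/2)$ lies in $[-1,1)$, so the normalized polynomial tends to $0$, by Lemma \ref{limit}-$(ii)$ when the argument is in $(-1,1)$ and by Lemma \ref{limit}-$(i)$ together with Lemma \ref{limit}-$(iii)$ and $(d-2)/2>\beta$ when it equals $-1$; for $i=i_0$ it is identically $1$, and the analogous statement holds for the $\zeta$-factors using $(d'-2)/2>\beta'$. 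Hence every summand with $(i,j)\neq(i_0,j_0)$ has a factor tending to $0$ and a bounded companion (each $R_{l_r}$-value is either $1$ or tends to $0$), so $C_{i_0j_0}=0$. Thus $c=0$, and Proposition \ref{eqdcspd} gives that $K$ is strictly positive definite.

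For necessity I will argue by contradiction. Suppose $K$ is strictly positive definite but $J_K$ contains no such sequence; then there is $N\in\mathbb{Z}_+$ with $J_K\subseteq\{(k,l):k\le N\}\cup\{(k,l):k>N,\ l\le N\}$. Pick $p$ distinct points $x_1,\dots,x_p\in\mathbb{M}^d$ and $p$ distinct points $w_1,\dots,w_p\in\mathbb{H}^{d'}$ and consider the nonnegative definite $p^2\times p^2$ matrix $M=[K((x_i,w_j),(x_{i'},w_{j'}))]$ on the $p^2$ distinct grid points $(x_i,w_j)$. Expanding $K_i^{d,d'}$ via Theorem \ref{gangolliext} and splitting the (absolutely convergent) sum over $J_K$ according to $k\le N$ versus $k>N$ (the latter forcing $l\le N$) writes $M$ as a sum of Kronecker products,
$$M=\sum_{k=0}^{N}P^{(k)}\otimes H^{(k)}+\sum_{l=0}^{N}G^{(l)}\otimes Q^{(l)},$$
where $P^{(k)}=[P_k^{(d-2)/2,\beta}(\cos(|x_ix_{i'}|/2))]$, $Q^{(l)}=[P_l^{(d'-2)/2,\beta'}(\cos(|w_jw_{j'}|/2))]$, and $H^{(k)},G^{(l)}$ are the $p\times p$ matrices coming from the corresponding convergent partial series in the remaining variable. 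By Gin\'e's addition formula $\operatorname{rank}(P^{(k)})\le\delta(k,d)$ and $\operatorname{rank}(Q^{(l)})\le\delta(l,d')$, while $\operatorname{rank}(H^{(k)}),\operatorname{rank}(G^{(l)})\le p$ trivially; since the rank of a Kronecker product is the product of the ranks and rank is subadditive,
$$\operatorname{rank}(M)\le p\Big(\sum_{k=0}^{N}\delta(k,d)+\sum_{l=0}^{N}\delta(l,d')\Big).$$
Choosing $p$ larger than the bracket on the right makes $M$ a singular nonnegative definite matrix on $p^2$ distinct points, hence not positive definite, contradicting strict positive definiteness. Therefore $J_K$ must contain a sequence with both coordinates tending to infinity.

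The hard part will be the necessity, and specifically the realization that ``both coordinate projections of $J_K$ are unbounded'' is strictly weaker than the asserted condition: for the $L$-shaped set $J_K=\{(k,0):k\ge0\}\cup\{(0,l):l\ge0\}$ both projections are unbounded, yet $K_i^{d,d'}(t,s)$ reduces to a constant plus a function of $t$ alone plus a function of $s$ alone, and the associated kernel already fails to be positive definite on a $2\times2$ grid $\{x_1,x_2\}\times\{w_1,w_2\}$; the rank count above is the clean way to dispatch all such configurations at once. (Alternatively, one may descend via the embeddings of Lemma \ref{embed} to some $S^q\times S^{q'}$ with $q,q'\ge2$, whose expansion set is the downward closure of $J_K$ with positive coefficients, and invoke the product-of-spheres characterization of \cite{guella3}.) On the sufficiency side the single point to get right is the passage to the distinct coordinate values $\xi_i,\zeta_j$: skipping it, a coincidence $x_\mu=x_\gamma$ would keep $R_{k_r}^{(d-2)/2,\beta}(\cos(|x_\mu x_\gamma|/2))=1$ and block the limiting argument — once that reduction is made, the computation is the one already carried out in the proof of Theorem \ref{dcsufnec}.
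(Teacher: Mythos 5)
Your proof is correct, but the necessity half takes a genuinely different route from the paper. The sufficiency argument is essentially the paper's: fix one point, evaluate the system of Proposition \ref{eqdcspd} there, normalize, and let $(k_r,l_r)\to(\infty,\infty)$, using Lemma \ref{limit}-$(ii)$ for arguments in $(-1,1)$ and Lemma \ref{limit}-$(i)$,$(iii)$ for the antipodal value $-1$, the non-sphere hypothesis entering only through the strict inequalities $(d-2)/2>\beta$, $(d'-2)/2>\beta'$; your grid reindexing by the distinct coordinate values is harmless but not actually needed, since the paper's splitting into the antipodal index sets $I_1,I_2,I_3$ handles coincidences such as $x_\mu=x_\gamma$ directly (the companion factor then tends to $0$ because the points are distinct in the product). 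For necessity the paper descends, via the isometric embeddings of Lemma \ref{embed} and the positive-coefficient expansion of $P_l^{(d'-2)/2,\beta'}$ in Gegenbauer polynomials, to a strictly positive definite kernel on $S^q\times S^q$ and then quotes the characterization of \cite{guella3}; you instead give a self-contained linear-algebra argument: if no sequence of $J_K$ has both coordinates unbounded, then $J_K\subseteq\{k\le N\}\cup\{l\le N\}$ for some $N$, and on a $p\times p$ grid of product points the kernel matrix splits (by absolute convergence of the expansion from Theorem \ref{gangolliext}) into finitely many Kronecker products whose ranks are controlled by Gin\'e's addition formula, giving $\operatorname{rank}(M)\le p\bigl(\sum_{k\le N}\delta(k,d)+\sum_{l\le N}\delta(l,d')\bigr)<p^2$ for large $p$, so $M$ is singular and strict positive definiteness fails. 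This rank-counting proof buys independence from \cite{guella3} and from the embedding machinery, works uniformly on the non-sphere product, and makes transparent why unboundedness of the two projections of $J_K$ separately is not enough; the paper's descent argument is shorter and reuses known sphere results, at the price of importing them. The only details you leave implicit --- convergence of the matrices $H^{(k)},G^{(l)}$ and the regrouping of the double series --- follow at once from the summability condition $\sum a_{k,l}P_k^{(d-2)/2,\beta}(1)P_l^{(d'-2)/2,\beta'}(1)<\infty$, so there is no gap.
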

\begin{proof} Assume there exists a sequence $\{(k_r,l_r)\}$ as described in the statement of the theorem.\ Let $(x_1,w_1),(x_2,w_2),\hdots,(x_n,w_n)$ be distinct points in $\mathbb{M}^d\times \mathbb{H}^{d'}$ and suppose that
$$
\sum_{\mu=1}^{n}c_\mu P^{(d-2)/2,\beta}_{k}(\cos{(|x_\mu w|/2)}) P^{(d'-2)/2,\beta'}_{l}(\cos{(|w_\mu w|/2)}=0,
$$
for real scalars $c_1,c_2, \ldots, c_n$, $(x,w)\in \mathbb{M}^d\times \mathbb{H}^{d'}$ and $(k,l) \in J_K$.\ For $\gamma\in\{1,2,\hdots,n\}$ fixed, let us put $x=x_\gamma$ and $w=w_\gamma$ in the previous equation and split it taking into account the following index sets (recall the normalization we have adopted for the metric in the spaces involved):
$$I_1=\{\mu: |x_\mu x_\gamma|=2\pi=|w_\mu w_\gamma|\},$$
$$I_2=\{\mu :|x_\mu x_\gamma|=2\pi\neq |w_\mu w_\gamma|\},$$
and
$$I_3=\{\mu : |x_\mu x_\gamma|\neq 2\pi = |w_\mu w_\gamma|\}.$$
We observe that one or more of these sets may be empty.\ The outcome is
\begin{eqnarray*}
c_\gamma P^{(d-2)/2,\beta}_{k}(1)P^{(d'-2)/2,\beta'}_{l}(1) & + & (-1)^{k+l} P_k^{\beta,(d-2)/2}(1)P_l^{\beta',(d'-2)/2}(1) \sum_{\mu \in I_1} c_\mu \\
& + &   (-1)^{k} P_k^{\beta,(d-2)/2}(1)\sum_{\mu\in I_2} c_\mu P_l^{(d'-2)/2,\beta'}(\cos{(|w_\mu w_\gamma|/2)})\\
 & & \hspace{-15mm}+ \ (-1)^{l} P_l^{\beta',(d'-2)/2}(1)\sum_{\mu\in I_3} c_\mu P_k^{(d-2)/2,\beta}(\cos{(|x_\mu x_\gamma|/2)})  \\
& & \hspace{-30mm} + \ \sum_{\mu \notin I_1 \cup I_2 \cup I_3}c_\mu P_k^{(d-2)/2,\beta}(\cos{(|x_\mu x_\gamma|/2)})P_l^{(d'-2)/2,\beta'}(\cos{(|w_\mu w_\gamma|/2)}) = 0.
\end{eqnarray*}
A small adjustment implies that
$$\hspace*{-60mm} c_\gamma +(-1)^{k_r+l_r} \frac{P_{k_r}^{\beta,(d-2)/2}(1)}{P_{k_r}^{(d-2)/2,\beta}(1)}\frac{P_{l_r}^{\beta',(d'-2)/2}(1)}{P_{l_r}^{(d'-2)/2,\beta'}(1)} \sum_{\mu\in I_1} c_\mu$$
$$ \hspace*{-30mm} + (-1)^{k_r} \frac{P_{k_r}^{\beta,(d-2)/2}(1)}{P_{k_r}^{(d-2)/2,\beta}(1)}\sum_{\mu\in I_2} c_\mu R_{l_r}^{(d'-2)/2,\beta'}(\cos{(|w_\mu w_\gamma|/2)})$$
$$+ (-1)^{l_r} \frac{P_{l_r}^{\beta',(d'-2)/2}(1)}{P_{l_r}^{(d'-2)/2,\beta'}(1)}\sum_{\mu \in I_3} c_\mu R_{k_r}^{(d-2)/2,\beta}(\cos{(|x_\mu x_\gamma|/2)})$$
$$ \hspace*{5mm}+  \sum_{\mu\notin I_1 \cup I_2 \cup I_3}c_\mu R_{k_r}^{(d-2)/2,\beta}(\cos{(|x_\mu x_\gamma|/2)})R_{l_r}^{(d'-2)/2,\beta'}(\cos{(|w_\mu w_\gamma|/2)}) =0, \quad r=1,2,\ldots.$$
We observe that in the last summand, if $\mu$ is fixed, either $x_\gamma \neq x_\mu$ or $w_\gamma \neq w_\mu$.\ Since $\alpha > \beta$ and $\alpha' > \beta'$, we may let $r\to \infty$ and apply Lemma \ref{limit} to conclude that $c_\gamma=0$.\ In view of the previous proposition, the sufficiency part is resolved.\
Going the other way around, if $K$ is strictly positive definite, we may repeat the procedure adopted in the first half of the proof of Theorem \ref{dcsufnec}.\ The index set of the resulting positive definite kernel on $S^q \times S^q$ is
$$\left\{(k,l):  \sum_{j=0}^\infty \sum_{j'=0}^\infty a_{k+j,l+j'}(K_i^{d,d'})>0\right\}.$$
Since the characterization for strict positive definiteness on $S^q \times S^q$ described in \cite{guella3} implies that the set above must contain at least one sequence $(k_r,l_r)$ for which $\lim_{r \to \infty}k_r=\lim_{r\to \infty} l_r=\infty$, the set $J_K$ must contain a sequence of this same type.\end{proof}

In the case in which $\mathbb{M}^{d}=S^{d}$, the following upgrade of the previous lemma will be more favorable.

\begin{lem} \label{matrix2} Let $K$ be a real, continuous, isotropic and positive definite kernel on $S^d \times \mathbb{H}^{d'}$, in which $d\geq 2$ and $\mathbb{H}^{d'}$ is not a sphere.\ The following statements are equivalent:\\
$(i)$ $K$ is strictly positive definite on $S^d \times \mathbb{H}^{d'}$;\\
$(ii)$ If $n\geq 1$, $(x_1,w_1),(x_2,w_2)\hdots,(x_n,w_n)$ are distinct points on $S^d \times \mathbb{H}^{d'}$, and the set $\{x_1,x_2, \ldots, x_n\}$ does not contain any pair of antipodal points, then the only solution of the system
$$\left\{\begin{array}{lll}
\sum_{\mu=1}^n  \left[(-1)^k c_\mu'+c_\mu'' \right]P^{(d-2)/2,\beta}_{k}(x_\mu \cdot x) P^{(d'-2)/2,\beta'}_{l}(\cos{(|w_\mu w|/2)})=0,\\
(x,w) \in S^d \times \mathbb{H}^{d'}\\
(k,l) \in J_K,
\end{array}\right.
$$
is $c_\mu'=c_\mu''=0$, $\mu=1,2,\hdots,n$.
\end{lem}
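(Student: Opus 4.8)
The plan is to deduce the equivalence directly from the reformulation of strict positive definiteness in Proposition \ref{eqdcspd}, passing back and forth between its system and the one in $(ii)$ by means of the single feature peculiar to the sphere. Since $d\geq 2$ forces $\beta=(d-2)/2$, the polynomials $P_k^{(d-2)/2,\beta}$ are Gegenbauer, so Lemma \ref{limit}$(i)$ gives $P_k^{(d-2)/2,\beta}(-t)=(-1)^kP_k^{(d-2)/2,\beta}(t)$; moreover every $x\in S^d$ has a unique antipode $\bar x$ and, under the normalization adopted for $S^d$, $\cos(|xy|/2)$ coincides with the inner product $x\cdot y$ of $x,y\in S^d\subset\mathbb{R}^{d+1}$, whence $\bar x\cdot y=-(x\cdot y)$ and thus $P_k^{(d-2)/2,\beta}(\bar x\cdot y)=(-1)^kP_k^{(d-2)/2,\beta}(x\cdot y)$. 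In other words, replacing a base point $x_\mu$ by its antipode in a term $P_k^{(d-2)/2,\beta}(x_\mu\cdot x)$ has exactly the effect of multiplying that term by $(-1)^k$; this is what the two coefficients $c_\mu'$ (the one carrying the $(-1)^k$) and $c_\mu''$ in $(ii)$ are designed to record. In what follows I write the products appearing in Proposition \ref{eqdcspd} in the form $P_k^{(d-2)/2,\beta}(x_\mu\cdot x)P_l^{(d'-2)/2,\beta'}(\cos(|w_\mu w|/2))$.

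For $(i)\Rightarrow(ii)$ I would argue directly. Suppose $K$ is strictly positive definite and that real scalars $c_1',\dots,c_n',c_1'',\dots,c_n''$ solve the system in $(ii)$ for distinct points $(x_1,w_1),\dots,(x_n,w_n)$ of $S^d\times\mathbb{H}^{d'}$ whose first coordinates contain no pair of antipodal points. By the remark above, $(-1)^kc_\mu'P_k^{(d-2)/2,\beta}(x_\mu\cdot x)=c_\mu'P_k^{(d-2)/2,\beta}(\bar x_\mu\cdot x)$, so the equations of $(ii)$ become
$$\sum_{\mu=1}^n c_\mu'P_k^{(d-2)/2,\beta}(\bar x_\mu\cdot x)P_l^{(d'-2)/2,\beta'}(\cos(|w_\mu w|/2))+\sum_{\mu=1}^n c_\mu''P_k^{(d-2)/2,\beta}(x_\mu\cdot x)P_l^{(d'-2)/2,\beta'}(\cos(|w_\mu w|/2))=0,$$
for all $(x,w)\in S^d\times\mathbb{H}^{d'}$ and all $(k,l)\in J_K$. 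I would then check that the $2n$ points $(\bar x_1,w_1),\dots,(\bar x_n,w_n),(x_1,w_1),\dots,(x_n,w_n)$ are pairwise distinct: within each of the two blocks this is inherited from the hypothesis because $x\mapsto\bar x$ is a bijection of $S^d$, while an equality $(\bar x_\mu,w_\mu)=(x_\nu,w_\nu)$ would force $\bar x_\mu=x_\nu$, i.e. an antipodal pair among the $x$'s (or $\bar x_\mu=x_\mu$, impossible on $S^d$), contrary to hypothesis. Thus the scalars $c_\mu',c_\mu''$ solve the system of Proposition \ref{eqdcspd} at these $2n$ distinct points, and the strict positive definiteness of $K$ forces them all to vanish.

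For $(ii)\Rightarrow(i)$ I would use the contrapositive. If $K$ is not strictly positive definite, Proposition \ref{eqdcspd} provides distinct points $(x_1,w_1),\dots,(x_n,w_n)$ and a nontrivial $(c_\mu)$ with $\sum_\mu c_\mu P_k^{(d-2)/2,\beta}(x_\mu\cdot x)P_l^{(d'-2)/2,\beta'}(\cos(|w_\mu w|/2))=0$ for all $(x,w)$ and all $(k,l)\in J_K$. Now I would fold the antipodal base points: choose $y_1,\dots,y_m\in S^d$, no two equal or antipodal, with each $x_\mu$ equal to $y_{i(\mu)}$ or to $\bar y_{i(\mu)}$, and write $x_\mu=\epsilon_\mu y_{i(\mu)}$ with $\epsilon_\mu\in\{1,-1\}$; since the $(x_\mu,w_\mu)$ are distinct, the assignment $\mu\mapsto(i(\mu),w_\mu,\epsilon_\mu)$ is injective. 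Using $P_k^{(d-2)/2,\beta}(x_\mu\cdot x)=\epsilon_\mu^kP_k^{(d-2)/2,\beta}(y_{i(\mu)}\cdot x)$ and regrouping the terms according to the pair $(y_i,z)$, where $z$ runs over the values of $w_\mu$ attached to the index $i$, the relation becomes
$$\sum_{i=1}^m\sum_{z}\bigl[a_{i,z}+(-1)^kb_{i,z}\bigr]P_k^{(d-2)/2,\beta}(y_i\cdot x)P_l^{(d'-2)/2,\beta'}(\cos(|z\,w|/2))=0,$$
for all $(x,w)$ and all $(k,l)\in J_K$, where $a_{i,z}$ (resp.\ $b_{i,z}$) is the coefficient $c_\mu$ of the unique $\mu$ with $i(\mu)=i$, $w_\mu=z$, $\epsilon_\mu=+1$ (resp.\ $-1$), and $0$ if there is no such $\mu$. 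This is an instance of the system in $(ii)$ over the distinct points $(y_i,z)$, whose first coordinates form the antipodal-free set $\{y_1,\dots,y_m\}$; hence hypothesis $(ii)$ yields $a_{i,z}=b_{i,z}=0$ for all $i$ and $z$, and since every $c_\mu$ equals one of these numbers, all $c_\mu=0$, a contradiction. Therefore $K$ is strictly positive definite.

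All the sign and index bookkeeping is routine; the one point that demands genuine care — and the sole place where the antipodal-free hypothesis in $(ii)$ is actually used — is verifying that the unfolded family of $2n$ points in $(i)\Rightarrow(ii)$ and the folded family $(y_i,z)$ in $(ii)\Rightarrow(i)$ consist of pairwise distinct points, so that Proposition \ref{eqdcspd} applies verbatim.
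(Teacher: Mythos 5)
Your proposal is correct and takes essentially the same route as the paper: both directions rest on the Gegenbauer parity $P_k^{(d-2)/2,\beta}(-t)=(-1)^kP_k^{(d-2)/2,\beta}(t)$, unfolding the system in $(ii)$ into a $2n$-point instance of Proposition \ref{eqdcspd} (with distinctness guaranteed by the antipodal-free hypothesis) for $(i)\Rightarrow(ii)$, and folding a nontrivial solution over arbitrary distinct points into an antipodal-free configuration for the contrapositive of $(ii)\Rightarrow(i)$. The only difference is that you write out explicitly the folding bookkeeping that the paper leaves as ``an easy matter to verify.''
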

\begin{proof} Assume $(i)$ holds.\ Let $(x_1,w_1),(x_2,w_2),\hdots,(x_n,w_n)$ be distinct points in $S^d \times \mathbb{H}^{d'}$ and assume that $\{x_1,x_2, \ldots, x_n\}$ does not contain pairs of antipodal points.\
Since $2\beta=d-2$, the system described in $(ii)$ can be written in the form
$$
\sum_{\nu=1}^{2n}  c_\nu P^{(d-2)/2,\beta}_{k}(\cos{(|x_\nu' x|/2)}) P^{(d'-2)/2,\alpha'}_{l}(\cos{(|w_\nu' w|/2)})=0, \quad x\in S^d,\quad w\in \mathbb{H}^{d'},
$$
in which $(x_\nu',w_\nu')=(x_\nu,w_\nu)$ and $c_\nu=c_\nu'$ if $\nu\in \{1,2,\hdots,n\}$ and $(x_\nu',w_\nu')=(-x_\nu,w_\nu)$ and $c_\nu=c_\nu''$ if $\nu\in\{n+1,2,\hdots,2n\}$.\ Since the $2n$ points $(x_\nu',w_\nu')$ are distinct, Proposition \ref{eqdcspd} implies that $c_\nu=0$, $\nu=1,2,\hdots,2n$.\ In particular, $c_\mu'=c_\mu''=0$, $\mu=1,2,\hdots,n$.\\
Conversely, if $(i)$ does not hold, the previous proposition allows the selection of distinct point $(x_1,w_1),(x_2,w_2),\hdots,(x_n,w_n)$ in $S^d \times \mathbb{H}^d$ so that the system
$$\left\{\begin{array}{lll}
\sum_{\mu=1}^{n}  c_\mu P^{(d-2)/2,\beta}_{k}(\cos{(|x_\mu x|/2)}) P^{(d'-2)/2,\alpha'}_{l}(\cos{(|w_\mu w|/2)})=0,\\
(x,w) \in S^d \times \mathbb{H}^{d'},\\
(k,l) \in J_K,
\end{array}\right.
$$
has a nontrivial solution $c_\mu$, $\mu=1,2,\ldots,c_n$.\ We can select $p$ $(\leq n)$ distinct points $\{(x_1',w_1'),(x_2',w_2'),\hdots,(x_p',w_p')\}$ in $S^d \times \mathbb{H}^{d'}$ in a such a way that $\{x_1', x_2',\ldots, x_p'\}$ contains no pairs of antipodal points and
$$
\{(x_1,w_1),(x_2,w_2),\hdots,(x_n,w_n)\subseteq \{(\pm x_1',w_1'),(\pm x_2',w_2'),\hdots,(\pm x_p',w_p')\}.
$$
However, it is an easy matter to verify that the system
\begin{equation*}
\left\{\begin{array}{lll}
\displaystyle \sum_{\mu=1}^n  \left[(-1)^kc_\mu'+ c_\mu'' \right]P^{(d-2)/2,\beta}_{k}(\cos{(|x_\mu' x|/2)}) P^{(d'-2)/2,\alpha'}_{l}(\cos{(|w_\mu' w|/2)})=0,\\
(x, w) \in S^d \times \mathbb{H}^{d'},\\
(k,l) \in J_K,
\end{array}\right.
\end{equation*}
has a nontrivial solution as well.\ Thus, $(ii)$ cannot hold.
\end{proof}

The following proposition is an alternative to the previous lemma via the sets
$$J_K^{e}:=J_K\cap [2\mathbb{Z}_+\times \mathbb{Z}_+]  \quad \mbox{and} \quad J_K^{o}:=J_K\cap [(2\mathbb{Z}_+ +1)\times  \mathbb{Z}_+].$$

\begin{prop} \label{matrix5} Let $K$ be a real, continuous, isotropic and positive definite kernel on $S^d\times \mathbb{H}^{d'}$, in which $d\geq 2$ and $\mathbb{H}^{d'}$ is not a sphere.\ The following statements are equivalent:\\
$(i)$ $K$ is strictly positive definite on $S^d\times \mathbb{H}^{d'}$;\\
$(ii)$ If $n\geq 1$, $(x_1,w_1),(x_2,w_2),\hdots,(x_n,w_n)$ are distinct points on $S^d\times \mathbb{H}^{d'}$, and the set $\{x_1,x_2, \ldots, x_n\}$ does not contain a pair of antipodal points, then the only solution of the system
\begin{equation*}
\left\{\begin{array}{lll}
\displaystyle \sum_{\mu=1}^n  c_\mu^e P^{(d-2)/2,\beta}_{k}(\cos{(|x_\mu x|/2)}) P^{(d'-2)/2,\beta'}_{l}(\cos{(|w_\mu w|/2)})=0,\quad  (k,l)\in J_{K}^{e},\\
\displaystyle \sum_{\mu=1}^n  c_\mu^o P^{(d-2)/2,\beta}_{k'}(\cos{(|x_\mu x|/2)}) P^{(d'-2)/2,\beta'}_{l'}(\cos{(|w_\mu w|/2)})=0,\quad (k',l')\in J_{K}^{o}, \\
(x,w)\in S^d\times \mathbb{H}^{d'},
\end{array}\right.
\end{equation*}
 is $c_\mu^e=c_\mu^o=0$, $\mu=1,2,\hdots,n$.
\end{prop}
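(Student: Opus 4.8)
The plan is to derive this directly from Lemma \ref{matrix2} by means of an invertible linear substitution in the coefficient variables, the substitution being governed by the parity of the first Jacobi index.

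First I would note the two elementary points of compatibility between the two statements. On $S^d$, with the metric normalization adopted in the paper, $\cos(|x_\mu x|/2)$ equals the inner product $x_\mu\cdot x$, so the factors $P_k^{(d-2)/2,\beta}(\cos(|x_\mu x|/2))$ in (ii) are exactly the factors $P_k^{(d-2)/2,\beta}(x_\mu\cdot x)$ of Lemma \ref{matrix2}-(ii); and the families of admissible configurations agree, namely $n$ distinct points $(x_1,w_1),\dots,(x_n,w_n)$ of $S^d\times\mathbb{H}^{d'}$ whose first coordinates contain no pair of antipodal points. Fixing such a configuration, I would then introduce, for prospective scalars $c_\mu^e,c_\mu^o$, the scalars $c_\mu'=(c_\mu^e-c_\mu^o)/2$ and $c_\mu''=(c_\mu^e+c_\mu^o)/2$, so that $c_\mu'+c_\mu''=c_\mu^e$ and $c_\mu''-c_\mu'=c_\mu^o$; the assignment $(c_\mu^e,c_\mu^o)\mapsto(c_\mu',c_\mu'')$ is a linear isomorphism of $\mathbb{R}^{2n}$, since on each $\mu$-block it is given by a $2\times 2$ matrix of nonzero determinant.

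Now, since $(-1)^k=1$ exactly for $(k,l)\in J_K^e$ and $(-1)^k=-1$ exactly for $(k,l)\in J_K^o$, the bracketed coefficient $(-1)^kc_\mu'+c_\mu''$ in the system of Lemma \ref{matrix2}-(ii) equals $c_\mu^e$ whenever $(k,l)\in J_K^e$ and equals $c_\mu^o$ whenever $(k,l)\in J_K^o$. Consequently, after splitting $J_K=J_K^e\sqcup J_K^o$, the single system of Lemma \ref{matrix2}-(ii) is, line by line, precisely the pair of systems appearing in (ii) of the present proposition. Because the substitution above is invertible, the system of Lemma \ref{matrix2}-(ii) has only the trivial solution $c_\mu'=c_\mu''=0$ ($\mu=1,\dots,n$) if and only if the pair of systems in (ii) above has only the trivial solution $c_\mu^e=c_\mu^o=0$ ($\mu=1,\dots,n$); thus (ii) of this proposition is equivalent to (ii) of Lemma \ref{matrix2}, which in turn is equivalent to (i) by that lemma. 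This completes the argument. There is no genuine obstacle; the point needing most care is simply to verify the invertibility of the substitution, so that the equivalence runs in both directions even in the degenerate cases $J_K^e=\emptyset$ or $J_K^o=\emptyset$, where one of the two coefficient families is left unconstrained by the system and strict positive definiteness accordingly fails.
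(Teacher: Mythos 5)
Your proposal is correct and follows essentially the same route as the paper: both reduce the statement to Lemma \ref{matrix2} via the invertible substitution $c_\mu^e=c_\mu'+c_\mu''$, $c_\mu^o=-c_\mu'+c_\mu''$, under which the system of Lemma \ref{matrix2}-$(ii)$ splits, according to the parity of $k$, into exactly the two systems of $(ii)$, so that trivial-only solutions correspond. The paper merely phrases the two implications separately (by contrapositive in one direction), whereas you package them as a single bijection of solution sets; the content is the same.
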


\begin{proof} If $(ii)$ were not true, we could find distinct points $(x_1,w_1),(x_2,w_2),\hdots,$\linebreak $(x_n,w_n)$ in $S^d\times \mathbb{H}^{d'}$, with $\{x_1,x_2, \ldots, x_n\}$ containing no pair of antipodal points and either
\begin{equation*}
\left\{\begin{array}{ll}
\sum_{\mu=1}^n  c_\mu^e P^{(d-2)/2,\beta}_{k}(\cos{(|x_\mu x|/2)}) P^{(d'-2)/2,\beta'}_{l}(\cos{(|w_\mu w|/2)})=0,\quad  (k,l)\in J_{K}^{e},\\
(x,w)\in S^d\times \mathbb{H}^{d'},\\
\end{array}\right.
\end{equation*}
or
\begin{equation*}
\left\{\begin{array}{ll}
\sum_{\mu=1}^n  c_\mu^o P^{(d-2)/2,\beta}_{k'}(\cos{(|x_\mu x|/2)}) P^{(d'-2)/2,\beta'}_{l'}(\cos{(|w_\mu w|/2)})=0,\quad (k',l')\in J_{K}^{o},\\
(x,w)\in S^d\times \mathbb{H}^{d'}
\end{array}\right.
\end{equation*}
having a nontrivial solution.\ We proceed considering the first possibility that emerges from the conclusion above, being the other case similar.\ For each $\mu \in\{1,2,\hdots,n\}$, the system
$$
\left\{\begin{array}{ccc}
c_\mu'+c_\mu'' & = & c_\mu^e\\
-c_\mu'+c_\mu'' & = & 0
\end{array}, \right.
$$
has a unique solution $c_\mu',c_\mu''$.\ Since $c_\mu^{e}\neq 0$ for at least one $\mu$, then $(c_\mu',c_\mu'') \neq 0$, for at least one $\mu$.\ It is now clear that
the system in Lemma \ref{eqdcspd}-$(ii)$ would have a nontrivial solution for the selection of points $(x_1,w_1),(x_2,w_2),\hdots,(x_n,w_n)$.\ Thus, $(i)$ implies $(ii)$.\\
The converse will be justified as long as we show that if $(ii)$ holds, then Lemma \ref{eqdcspd}-$(ii)$ holds.\ But, if $c_\mu^{e}$ and $c_\mu^{o}$ are known, the system
 $$
\left\{\begin{array}{ccc}
c_\mu'+c_\mu'' & = & c_\mu^e\\
-c_\mu'+c_\mu'' & = & c_\mu^o
\end{array}, \right.
$$ always has a unique solution.\ If $c_\mu^{e}=c_\mu^{o}=0$ for all $\mu$, then the solution of the corresponding system vanishes.\ Thus, if the system in $(ii)$ has the trivial solution only,
 the same will be true of the system in Lemma \ref{eqdcspd}-$(ii)$.
\end{proof}

We are ready to state and prove the last main contribution of the paper.

\begin{thm}\label{spdnec1} Let $K$ be a real, continuous, isotropic and positive definite kernel on $S^d \times \mathbb{H}^{d'}$.\ Assume that $d\geq 2$ and that $\mathbb{H}^{d'}$ is not a sphere.\ In order that $K$ be strictly positive definite it is necessary and sufficient that the set $J_K$ contain sequences $\{(k_r,l_r)\}$ and $\{(k_r',l_r')\}$ so that $\{k_r\}\subset 2\mathbb{Z}_+$, $\{k_r'\} \subset 2\mathbb{Z}_++1$, and  $\lim_{r\to\infty}k_r=\lim_{r\to\infty}k_r'=\lim_{r\to\infty} l_r=\lim_{r\to\infty} l_r' =\infty$.
\end{thm}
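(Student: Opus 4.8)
The plan is to prove the sufficiency by verifying condition $(ii)$ of Proposition \ref{matrix5}, and the necessity by restricting $K$ to a product of spheres and invoking the known characterization on $S^d\times S^q$.

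\emph{Sufficiency.} Suppose $J_K$ contains sequences $\{(k_r,l_r)\}$ and $\{(k_r',l_r')\}$ as in the statement. Fix distinct points $(x_1,w_1),\ldots,(x_n,w_n)$ in $S^d\times\mathbb{H}^{d'}$ with $\{x_1,\ldots,x_n\}$ containing no pair of antipodal points, and suppose $c_\mu^e,c_\mu^o$ ($\mu=1,\ldots,n$) solve the system in Proposition \ref{matrix5}$(ii)$. Fix $\gamma$. Since $k_r$ is even, $(k_r,l_r)\in J_K^e$; substituting $x=x_\gamma$, $w=w_\gamma$ and $(k,l)=(k_r,l_r)$ into the first equation of that system and dividing by $P_{k_r}^{(d-2)/2,\beta}(1)P_{l_r}^{(d'-2)/2,\beta'}(1)>0$ leaves
$$
c_\gamma^e+\sum_{\mu\ne\gamma}c_\mu^e R_{k_r}^{(d-2)/2,\beta}(\cos(|x_\mu x_\gamma|/2))R_{l_r}^{(d'-2)/2,\beta'}(\cos(|w_\mu w_\gamma|/2))=0.
$$
I would then let $r\to\infty$ and argue that every term with $\mu\ne\gamma$ vanishes in the limit. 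If $x_\mu\ne x_\gamma$, then $\cos(|x_\mu x_\gamma|/2)\in(-1,1)$ (it is $-1$ only for antipodal points and $1$ only for coinciding ones), so $R_{k_r}^{(d-2)/2,\beta}$ evaluated there tends to $0$ by Lemma \ref{limit}$(ii)$ while the $\mathbb{H}^{d'}$-factor stays bounded by $1$. If $x_\mu= x_\gamma$, then $w_\mu\ne w_\gamma$ and the first factor equals $1$; if $w_\mu$ is not antipodal to $w_\gamma$ the second factor tends to $0$ again by Lemma \ref{limit}$(ii)$, and if $w_\mu$ is antipodal to $w_\gamma$ that factor equals $(-1)^{l_r}P_{l_r}^{\beta',(d'-2)/2}(1)/P_{l_r}^{(d'-2)/2,\beta'}(1)$ by Lemma \ref{limit}$(i)$, hence tends to $0$ by Lemma \ref{limit}$(iii)$, because $(d'-2)/2>\beta'$ — this strict inequality being precisely the hypothesis that $\mathbb{H}^{d'}$ is not a sphere. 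Therefore $c_\gamma^e=0$; as $\gamma$ was arbitrary, $c_\mu^e=0$ for all $\mu$, and the identical computation applied to the second equation of the system with the odd sequence $\{(k_r',l_r')\}$ gives $c_\mu^o=0$. Proposition \ref{matrix5} then yields that $K$ is strictly positive definite.

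\emph{Necessity.} Assume $K$ is strictly positive definite on $S^d\times\mathbb{H}^{d'}$. Since $\mathbb{H}^{d'}$ is not a sphere, Lemma \ref{embed} together with $\mathbb{P}^{2}(\mathbb{R})\cong S^2$ provides an integer $q\ge2$ with $S^q\hookrightarrow\mathbb{H}^{d'}$, so the restriction of $K$ to $S^d\times S^q$ is strictly positive definite as well. Because $d=2\beta+2$, the polynomial $P_k^{(d-2)/2,\beta}$ is already a Gegenbauer polynomial, and re-expanding $P_l^{(d'-2)/2,\beta'}(s)=\sum_{j=0}^l b_j^l P_{l-j}^{(q-2)/2,(q-2)/2}(s)$ (all $b_j^l>0$) shows that the isotropic part of $K|_{S^d\times S^q}$ has coefficient set $\{(k,l):\sum_{j\ge0}a_{k,l+j}(K_i^{d,d'})>0\}$. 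By the characterization of strict positive definiteness on $S^d\times S^q$ ($d,q\ge2$) from \cite{guella3}, this set contains a sequence with even first coordinate and a sequence with odd first coordinate, each with both coordinates tending to infinity. For each member $(k,l)$ of such a sequence, choose $j\ge0$ with $a_{k,l+j}(K_i^{d,d'})>0$; then $(k,l+j)\in J_K$, with the same first coordinate $k$ and second coordinate $l+j\ge l$. This produces in $J_K$ precisely the two sequences demanded in the statement.

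I expect the case analysis in the sufficiency part to be the main obstacle: one must make sure that every off-diagonal term in the displayed identity really vanishes in the limit, which forces the split according to whether $x_\mu= x_\gamma$ and, when it does, whether $w_\mu$ is antipodal to $w_\gamma$; the antipodal subcase is the one that would collapse if $\mathbb{H}^{d'}$ were a sphere, and it is rescued by Lemma \ref{limit}$(iii)$, available only because $(d'-2)/2>\beta'$. On the necessity side the one delicate point is reading off the cited characterization on $S^d\times S^q$ in the right form; this is legitimate because the decomposition after Lemma \ref{embed} alters only the second coordinate and hence preserves the parity of the first, so the even and odd first-coordinate sequences transfer back and forth between $J_K$ and the coefficient set of $K|_{S^d\times S^q}$.
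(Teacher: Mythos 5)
Your proposal is correct and follows essentially the same route as the paper: sufficiency by verifying Proposition \ref{matrix5}$(ii)$ with the choice $x=x_\gamma$, $w=w_\gamma$, using the even sequence on the $J_K^e$ equations and the odd sequence on the $J_K^o$ equations together with Lemma \ref{limit}$(i)$--$(iii)$ (your explicit case split, including the $w$-antipodal subcase handled via $(d'-2)/2>\beta'$, is exactly what the paper's $I_1/I_2$ decomposition encodes); and necessity by embedding $S^q\hookrightarrow\mathbb{H}^{d'}$, re-expanding in Gegenbauer polynomials with positive coefficients, and pulling the two sequences back from the characterization on $S^d\times S^q$ in \cite{guella3}.
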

\begin{proof}
Let us assume that $J_K$ contains sequences as described in the statement of the theorem.\ We intend to use Proposition \ref{matrix5} in order to conclude that $K$ is strictly positive definite.\ Let $(x_1,w_1),(x_2,w_2),\hdots,(x_n,w_n)$ be distinct points in $S^d \times \mathbb{H}^{d'}$, assume that $\{x_1, x_2, \ldots, x_n\}$ does not contain any pairs of antipodal points and that the system in Proposition \ref{matrix5}-$(ii)$ holds.\
Fixing $\gamma$, introducing $x=x_\gamma$ and $w=w_\gamma$ in the first equation of the system and proceeding as in the proof of Theorem \ref{spdnec}, we deduce that
\begin{eqnarray*}
c_\gamma^e +(-1)^{l} \frac{P_{l}^{\beta',(d'-2)/2}(1)}{P_{l}^{(d'-2)/2, \beta'}(1)} \sum_{\mu\in I_1} c_\mu^e R_{k}^{(d-2)/2,\beta}(\cos{(|x_\mu x_\gamma|/2)})\\
& & \hspace{-80mm} +  \sum_{\mu \in I_2}c_\mu^e R_{k}^{(d-2)/2,\beta}(\cos{(|x_\mu x_\gamma|/2)})R_{l}^{(d'-2)/2,\beta'}(\cos{(|w_\mu w_\gamma|/2)}) =0,
\end{eqnarray*}
in which the index sets are now $I_1=\{\mu: |w_\mu w_\gamma|=2\pi\}$ and $I_2=\{\mu :|w_\mu w_\gamma|\neq 2\pi\}$.
Substituting the first double sequence guaranteed by our assumption in this equation and letting $r \to \infty$, Lemma \ref{limit} implies that $c_\gamma^e=0$.\ A similar procedure with the second equation of the system and with the second double sequence from the assumption leads to $c_\gamma^o=0$.\ Since $\gamma$ is arbitrary, the only solution of the system in Proposition \ref{matrix5}-$(ii)$ is the trivial one.\ Therefore, $K$ is strictly positive definite on $S^d \times \mathbb{H}^{d'}$.\ In order to prove the condition is necessary, we need to imitate the corresponding part in the proof of Theorem \ref{dcsuf}.\ We get a strictly positive definite kernel on $S^d \times S^q$ with corresponding index set
$$\left\{(k,l):  \sum_{j=0}^\infty a_{k,l+j}(K_i^{d,d'})>0\right\}.$$
Once again, the characterization for strict positive definiteness on $S^d \times S^q$ described in \cite{guella3} implies that $J_k$ must contain two sequences as quoted in the statement of the theorem being proved.\end{proof}

As a final remark, we would like to observe that it is still an open problem to obtain versions of the theorems proved in Sections 3 and 4 in the cases in which the spaces are different and at least one of them is $S^1$.\ As a matter of fact, for the theorems proved in Sections 3, it is also open the case in which both spaces are $S^1$.


%
%

\vspace*{2cm}

\noindent V. S. Barbosa and V. A. Menegatto \\
Departamento de
Matem\'atica,\\ ICMC-USP - S\~ao Carlos, Caixa Postal 668,\\
13560-970 S\~ao Carlos SP, Brasil\\ e-mails: victorrsb@gmail.com; menegatt@icmc.usp.br

\end{document}